\def\titlerunning#1{\gdef\titrun{#1}}
\def\author#1{\gdef\autrun{\def\and{\unskip, }#1}\gdef\@author{#1}}
\def\address#1{{\def\and{\\\hspace*{18pt}}\renewcommand{\thefootnote}{}%
		\footnote {#1}}%
	\markboth{\autrun}{\titrun}}
\def\email#1{e-mail: #1}
\def\keywords#1{\par\medskip
	\noindent\textbf{Keywords.} #1}
\newtheorem{theorem}{Theorem}[section]
\newtheorem{corollary}[theorem]{Corollary}
\newtheorem{lemma}[theorem]{Lemma}
\newtheorem{proposition}[theorem]{Proposition}
\theoremstyle{definition}
\newtheorem{definition}[theorem]{Definition}
\newtheorem{remark}[theorem]{Remark}
\theoremstyle{example}
\numberwithin{equation}{section}
\def \N {\mathbb{N}}
\def \Z {\mathbb{Z}}
\def \Z {\mathcal{Z}}
\def \a {\alpha }
\def \b {\beta}
\def \de {\delta}
\def \De {\Delta}
\def \La {\Lambda}
\def\w {\omega}
\def\Om{\Omega}
\def\na {\nabla}
\def\Ga{\Gamma}
\begin{document}
\baselineskip=17pt
	
\titlerunning{$L^{2}$-hard Lefschetz complete symplectic manifolds}
\title{$L^{2}$-hard Lefschetz complete symplectic  manifolds}
	
\author{Teng Huang and Qiang Tan}
	
\date{}
	
\maketitle
	
\address{T. Huang: School of Mathematical Sciences, University of Science and Technology of China; CAS Key Laboratory of Wu Wen-Tsun Mathematics,  University of Science and Technology of China, Hefei, Anhui, 230026, People’s Republic of China; \email{htmath@ustc.edu.cn;htustc@gmail.com}}
\address{Q. Tan: Faculty of Science, Jiangsu University, Zhenjiang, Jiangsu 212013, People’s Republic of China; \email{tanqiang@ujs.edu.cn}}

\begin{abstract}
For a complete symplectic manifold $M^{2n}$, we define the $L^{2}$-hard Lefschetz property on $M^{2n}$. We also prove that the complete symplectic manifold $M^{2n}$ satisfies $L^{2}$-hard Lefschetz property if and only if every class of $L^{2}$-harmonic forms contains a $L^{2}$ symplectic harmonic form. As an application, we get if $M^{2n}$ is a closed symplectic parabolic manifold which satisfies the hard Lefschetz property, then its Euler characteristic satisfies the inequality $(-1)^{n}\chi(M^{2n})\geq0$.
\end{abstract} 
\keywords{ $L^{2}$-hard Lefschetz property, symplectic parabolic manifold, Euler  characteristic}

\section{Introduction}
Let $(M,\w)$ be a $2n$-dimensional symplectic manifold. A smooth form  $\a$ on $M$ is called symplectically harmonic if $d\a=d^{\La}a=0$. Symplectic Hodge theory was introduced introduced by Brylinski \cite{Brylinski}. Further he conjectured that on a compact symplectic manifold, every de Rham cohomology class contains a harmonic representative. Some evidence for his conjecture was presented in his paper \cite{Brylinski} and
he proved the conjecture for closed K\"{a}hler manifolds. Brylinski's conjecture is equivalent to the question of the existence of a Hodge decomposition in the symplectic sense. However, a symplectic version of the above result does not hold in general. Mathieu gave two ways to give counter examples to Brylinski’s conjecture. Mathieu \cite{Mathieu} proved that any de Rham cohomology class has a (not necessarily unique) symplectically harmonic representative if and only if $(M, \w)$ satisfies the hard Lefschetz property, i.e., the map 
\begin{equation}\label{E3}
L^{n-k}: H^{k}_{dR}(M)\rightarrow H^{2n-k}_{dR}(M)
\end{equation}given by $L^{n-k}[\a]= [\a\wedge\w^{n-k}]$ is onto for all $k\leq n-1$. His proof involves the representation theory of quivers and Lie superalgebras. In fact, Mathieu's theorem is a generalization of the Hard Lefschetz Theorem for compact K\"{a}hler manifolds. Yan in \cite{Yan} given a simpler, more direct, proof of this fact, it follows the idea of the standard proof of the Hard Lefschetz Theorem (see \cite{GH}). In \cite{FMU}, the authors dealt with the symplectic manifolds satisfying a weak property following \cite{FM}, they said that $(M,\w)$ is an $s$-Lefschetz symplectic manifold, $0<s<n-1$, if (\ref{E3}) is an epimorphism for all $k<s$. In \cite{TW}, they discussed the hard Lefschetz condition on various cohomology groups and verify them for the Nakamura manifold of completely solvable type and the Kodaira-Thurston manifold.  In \cite{HT}, the author studied the $L^{2}$ cohomology of complete almost K\"{a}hler manifold. They proved that the reduced $L^{2}$ cohomology group of degree $2$ decomposes as direct sum of the closure of the invariant and anti-invariant $L^{2}$-cohomology. 

We denote by $$H^{k}_{(2);hr}(M,\w)=\{\a\in\Om^{k}_{(2)}(M): d\a=d^{\La}\a=0\}$$ the space of the $L^{2}$ symplectic harmonic $k$-forms. Let us denote by $\tilde{H}^{k}_{(2),hr}(M,\w)$ the space of $L^{2}$-harmonic cohomology in degree $k$, that is,the subspace of the $L^{2}$-harmonic forms  $\mathcal{H}_{(2)}^{k}(M)\simeq H^{k}_{(2)}(M)$ consisting of all classes which contain at least one $L^{2}$ symplectic harmonic $k$-form. Here $H^{k}_{(2)}(M)$ is the $L^{2}$-coholomogy group of degree $k$. We call that a symplectic manifold $(M,\w)$ of dimension $2n$ is said to  satisfy the $L^{2}$-hard Lefschetz property, if the map 
$$L^{n-k}: H^{k}_{(2)}(M)\rightarrow H^{2n-k}_{(2)}(M)$$
is an onto for all $k\leq n-1$. 

Let $(M,\w)$ be a closed symplectic manifold. Choose a $\w$-compatible almost complex structure $J$ on $M$. Define an almost K\"{a}hler metric, $g(\cdot,\cdot)=\w(\cdot,J\cdot)$, on $M$. Then the triple $(g, J,\w)$ is called an almost K\"{a}hler structure on $M$ and the quadruple $(M, g, J,\w)$ is called a closed almost K\"{a}hler manifold. Following the idea in \cite{Yan}, we prove the following result.
\begin{theorem}\label{T4}
Let $(M,g,J,\w)$ be a $2n$-dimensional complete non-compact almost K\"{a}hler manifold. Then the following statements are equivalent:\\
(1) Any class of $L^{2}$-harmonic forms contains a $L^{2}$ symplectic -harmonic form,\\
(2)  For any $k\leq n$, the cup product $L^{k}:H^{n-k}_{(2)}(M)\rightarrow H^{n+k}_{(2)}(M)$ is surjective.
\end{theorem}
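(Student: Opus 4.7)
The plan is to transplant Yan's \cite{Yan} proof of Mathieu's theorem (the compact analog) to the $L^{2}$ setting on a complete almost K\"{a}hler manifold. The key algebraic tools are purely pointwise and hence unchanged from the compact case: (i) the $\mathfrak{sl}(2,\mathbb{R})$-representation on $\Omega^{*}(M)$ generated by $L=\omega\wedge\cdot$, its pointwise adjoint $\Lambda$, and the grading operator $H$; (ii) Koszul's identity $d^{\Lambda}=[d,\Lambda]$, which holds on any symplectic manifold; and (iii) the pointwise Lefschetz decomposition $\Omega^{k}=\bigoplus_{r} L^{r}\mathcal{P}^{k-2r}$ together with the pointwise isomorphism $L^{n-k}:\Omega^{k}\to\Omega^{2n-k}$ for $k\leq n$. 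Completeness of $M$ enters through the $L^{2}$-Hodge decomposition identifying $H^{k}_{(2)}(M)$ with $\H^{k}_{(2)}(M)$, whose elements are simultaneously $d$- and $d^{*}$-closed.

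For the direction $(1)\Rightarrow(2)$, I would take $[\beta]\in H^{n+k}_{(2)}(M)$ and, by hypothesis, pick an $L^{2}$ symplectic harmonic representative $\tilde\beta$. Writing $\tilde\beta$ in its primitive decomposition and inverting the pointwise isomorphism $L^{k}$ produces $\alpha\in\Omega^{n-k}$ with $L^{k}\alpha=\tilde\beta$; since $|\omega|_{g}$ is constant on an almost K\"{a}hler manifold, $L^{k}$ and its inverse are uniformly bounded pointwise and $\alpha$ inherits $L^{2}$-regularity from $\tilde\beta$. One then verifies $d\alpha=0$ by combining $L^{k}d\alpha=0$ (from $d\tilde\beta=0$) with $d\Lambda\tilde\beta=0$ (from $d^{\Lambda}\tilde\beta=0$), using commutation relations such as $[\Lambda,L^{k}]=kL^{k-1}(H-k+1)$ to kill the obstruction coming from the kernel of $L^{k}$ on $\Omega^{n-k+1}$.

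For the direction $(2)\Rightarrow(1)$, I would argue by induction on the primitive decomposition of an $L^{2}$-harmonic form $\alpha$ of degree $k$. Since $d\alpha=0$, Koszul's identity gives $d^{\Lambda}\alpha=d\Lambda\alpha$, which is already $d$-exact, so the obstruction to symplectic harmonicity is already trivial in cohomology. The surjectivity hypothesis (2) allows solving, at the $L^{2}$-cohomology level, for a correction $\eta$ such that each primitive piece of $\alpha-d\eta$ is $d$-closed; this forces $\alpha-d\eta$ to be symplectic harmonic and cohomologous to $\alpha$. Each correction stays in $L^{2}$ because $L$ and $\Lambda$ are pointwise bounded and the harmonic projection onto $\H^{*}_{(2)}(M)$ is an $L^{2}$-bounded operator on a complete manifold.

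The principal obstacle will be upgrading the abstract surjectivity hypothesis (2) to solvability within $L^{2}$ with controlled norms at every step of the induction. In the compact case this is automatic, but here one must invoke the closed-range property of $d$ on a complete manifold—equivalently, the strong $L^{2}$-Hodge decomposition $L^{2}\Omega^{*}(M)=\H^{*}_{(2)}(M)\oplus\overline{\mathrm{im}\,d}\oplus\overline{\mathrm{im}\,d^{*}}$—to ensure that each inductive preimage can be chosen in $L^{2}$ orthogonal to the harmonic space. Verifying that the iterative construction terminates within $L^{2}$, together with the commutator bookkeeping needed to keep track of how $\Lambda$ and $L$ interact with $d$ across the primitive decomposition, is where the technical work will concentrate.
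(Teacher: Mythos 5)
Your direction $(1)\Rightarrow(2)$ is essentially the paper's: the paper isolates your ``invert $L^{k}$ on symplectic harmonic representatives'' step as Lemma \ref{L3}, proved exactly as you suggest via the Lefschetz decomposition and Proposition \ref{P2} (all primitive components of a $d$- and $d^{\Lambda}$-closed form are closed, so shifting the powers of $L$ in the decomposition produces a symplectic harmonic preimage in $L^{2}$, boundedness of $\omega$ keeping everything square-integrable), and then concludes by the obvious commutative diagram. That half of your plan would go through.

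The direction $(2)\Rightarrow(1)$, however, has a genuine gap: your sketch never actually uses hypothesis (2), and the step it rests on is wrong as reasoning. Knowing that $d^{\Lambda}\alpha=d\Lambda\alpha$ is $d$-exact does not make ``the obstruction trivial'': to correct $\alpha$ you must solve $d^{\Lambda}d\eta=d\Lambda\alpha$ in $L^{2}$, which is precisely a $dd^{\Lambda}$-lemma statement and is not implied by exactness --- indeed the equivalence of the $dd^{\Lambda}$-lemma with hard Lefschetz is the nontrivial content here, so assuming solvability begs the question. The paper supplies the missing engine in two pieces. First, hypothesis (2) is used to prove the Lefschetz splitting $H^{k}_{(2)}(M)=P^{k}_{(2)}(M)\oplus L\bigl(H^{k-2}_{(2)}(M)\bigr)$ (Lemma \ref{L4}); this reduces an induction on degree to the primitive case, the image of $L$ being handled by the inductive hypothesis together with the commutation relation $[d^{\Lambda},L]=d$, which shows $L$ preserves symplectic harmonicity. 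Second, for a primitive class $v=[z]\in P^{n-k}_{(2)}(M)$ one has $z\wedge\omega^{k+1}=\lim d\gamma_{i}$ in $L^{2}$ (vanishing in \emph{reduced} cohomology gives only a limit of exact forms, not an exact form); the pointwise Lefschetz isomorphism $L^{k+1}:\Omega^{n-k-1}_{(2)}(M)\to\Omega^{n+k+1}_{(2)}(M)$, with inverse bounded since $|\omega|_{g}$ is bounded, yields $\gamma_{i}=L^{k+1}\theta_{i}$ with $\theta_{i}\in L^{2}$, and $u=z-\lim d\theta_{i}$ is then a closed primitive representative of $v$, hence symplectic harmonic by Lemma \ref{L2}. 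Your fallback tool is also misidentified: on a complete noncompact manifold $d$ does \emph{not} in general have closed range (which is exactly why one works with reduced $L^{2}$-cohomology), and the Hodge--Kodaira decomposition with closures, which always holds here, is not equivalent to closed range; the paper's sequence-and-limit construction is designed to avoid any closed-range assumption. Without the splitting lemma and the explicit primitive-representative construction, your induction has no step to run.
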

A closed symplectic manifold $(M,\w)$ is said to satisfy the $dd^{\La}$-Lemma if every $d$-exact, $d^{\La}$-closed form is $dd^{\La}$-exact. In fact, it turns out that  $(M,\w)$ satisfies  the $dd^{\La}$-lemma if and only if the hard Lefschetz condition holds on $(M,\w)$. In \cite{TWZ}, the authors defined the $L^{2}$-$dd^{\La}$ lemma on complete symplectic manifold. They also proved that the $L^{2}$-$dd^{\La}$ lemma holds on a universal covering space of a close symplectic manifold which satisfies the hard Lefschetz property.  Combining with theorem \ref{T4}, we have a result as follows.
\begin{theorem}\label{T6}
Let $(M,g,J,\w)$ be a $2n$-dimensional closed almost K\"{a}hler manifold. We denote by $\pi: (\tilde{M},\tilde{g},\tilde{J},\tilde{\w}) \rightarrow (M, g, J,\w)$ the universal covering map. If $(M, g, J,\w)$ satisfies the hard Lefschetz property, then $(\tilde{M},\tilde{g},\tilde{J},\tilde{\w})$ satisfies the $L^{2}$-hard Lefschetz property. 
\end{theorem}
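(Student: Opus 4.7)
The plan is to combine Theorem~\ref{T4} with the $L^{2}$-$dd^{\La}$-lemma on universal covers established in \cite{TWZ}. The universal cover $(\tilde{M},\tilde{g},\tilde{J},\tilde{\w})$ is a complete almost K\"ahler manifold, so Theorem~\ref{T4} reduces the $L^{2}$-hard Lefschetz property on $\tilde{M}$ to showing that every class in the $L^{2}$-harmonic cohomology $\H^{k}_{(2)}(\tilde{M})$ admits an $L^{2}$ symplectically harmonic representative. If $\pi_{1}(M)$ is finite then $\tilde{M}$ is closed and the conclusion follows at once from Mathieu's theorem, so the substantive case is when $\tilde{M}$ is non-compact.

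The first step is to invoke the hypothesis that $(M,\w)$ satisfies the hard Lefschetz property. By the theorem from \cite{TWZ} quoted in the introduction, the $L^{2}$-$dd^{\La}$-lemma then holds on $\tilde{M}$: every $d$-closed, $d^{\La}$-exact $L^{2}$ form on $\tilde{M}$ is $dd^{\La}$-exact in the appropriate $L^{2}$ sense.

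The second step uses the $L^{2}$-$dd^{\La}$-lemma to produce the desired representative, following Yan's strategy in the compact case \cite{Yan}. Fix $\a\in\H^{k}_{(2)}(\tilde{M})$. Since $d\a=0$ and $dd^{\La}+d^{\La}d=0$ (which follows from $d^{\La}=[d,\La]$ together with $d^{2}=0$), we obtain $d(d^{\La}\a)=-d^{\La}(d\a)=0$. Thus $d^{\La}\a$ is simultaneously $d$-closed and manifestly $d^{\La}$-exact, so the $L^{2}$-$dd^{\La}$-lemma produces an $L^{2}$ form $\gamma$ with $d^{\La}\a=dd^{\La}\gamma$. The form $\tilde{\a}:=\a+d\gamma$ then satisfies
\[ d\tilde{\a}=0, \qquad d^{\La}\tilde{\a}=d^{\La}\a+d^{\La}d\gamma=dd^{\La}\gamma-dd^{\La}\gamma=0, \]
so $\tilde{\a}$ is $L^{2}$ symplectically harmonic and represents the same class as $\a$ in $L^{2}$-cohomology. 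Applying Theorem~\ref{T4} then yields surjectivity of $L^{n-k}:H^{k}_{(2)}(\tilde{M})\to H^{2n-k}_{(2)}(\tilde{M})$ for all $k\le n-1$, which is the $L^{2}$-hard Lefschetz property.

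The main obstacle is the precise formulation of the $L^{2}$-$dd^{\La}$-lemma used in the middle step. For the argument above to give a genuine class equality rather than merely a weak-limit statement, the primitive $\gamma$ must be chosen so that $d\gamma\in L^{2}$, ensuring that $\a$ and $\tilde{\a}$ lie in the same \emph{reduced} $L^{2}$-cohomology class; this is the content that has to be extracted carefully from \cite{TWZ}. The remaining points, namely the Hodge--Kodaira identification $\H^{k}_{(2)}(\tilde{M})\simeq H^{k}_{(2)}(\tilde{M})$ and the verification that $\tilde{M}$ meets the hypotheses of Theorem~\ref{T4}, are routine given completeness.
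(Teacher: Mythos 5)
Your proposal is correct and follows essentially the same route as the paper: the paper proves Theorem \ref{T6} by combining Theorem \ref{T4} with Proposition \ref{P4}, whose proof is exactly your argument --- $d^{\La}\a$ is $d$-closed and $d^{\La}$-exact, so the $L^{2}$-$dd^{\La}$ lemma of \cite{TWZ} (Proposition \ref{P3}) yields $\gamma\in L^{2}_{2}$ with $d^{\La}\a=dd^{\La}\gamma$, and $\a+d\gamma$ is the desired $L^{2}$ symplectic harmonic representative. The only point you leave implicit --- that $d^{\La}\a\in L^{2}$ (indeed $\a\in L^{2}_{1}$) so the lemma applies, and that $d\gamma\in L^{2}$ so the reduced class is unchanged --- is supplied in the paper by the Weitzenb\"{o}ck-bootstrap lemma showing $\na^{k}\a\in L^{2}$ for $L^{2}$-harmonic forms on manifolds of bounded geometry, which matches the caveat you flag at the end.
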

\begin{remark}
The covering space $(\tilde{M},\tilde{g},\tilde{J},\tilde{\w})$ satisfies the $L^{2}$-hard Lefschetz property if only if  $L^{n-k}:H^{k}_{(2)}(M)\rightarrow H^{2n-k}_{(2)}(M)$ is surjective for all $k\leq n$. In a complete non-compact case we actually have that $L^{n-k}$ are isomorphisms because of Poincar\'{e} duality.
\end{remark}
The last results of this article related to a well-know problem, attributed to Hopf, to the effect that the Euler number $\chi(M^{2n})$ of a compact Riemannian manifold $M^{2n}$ of negative section curvature must satisfy the inequality $(-1)^{n}\chi(M^{2n})>0$. This conjecture is true in dimensions $2$ and $4$ \cite{Chern} and it has been verified in the K\"{a}hler case for all $n$ by Gromov \cite{Gromov}. Dodziuk \cite{Dodziuk} and Singer have proposed to settle the Hopf conjecture using the Atiyah index theorem for coverings (see \cite{Atiyah}). In this approach, one is required to prove a vanishing theorem for $L^{2}$ harmonic $k$-forms, $k\neq n$, on the universal covering of $M^{2n}$. The vanishing of these  $L^{2}$ Betti numbers implies that  $(-1)^{n}\chi(M^{2n})\geq0$. The strict inequality $(-1)^{n}\chi(M^{2n})>0$  follows provided one can establish the existence of nontrivial $L^{2}$ harmonic $n$-forms on the universal cover. 

The program outlined above was carried out by Gromov \cite{Gromov} when the manifold in question is K\"{a}hler and is homotopy equivalent to a compact  manifold with strictly negative sectional curvatures.  Gromov \cite{Gromov} points out that if the Riemannian manifold $(M,g)$ is a complete simply-connected manifold and it has strictly negative sectional curvatures, then every smooth bounded closed form of degree $k\geq2$ is $d$(bounded). Therefore, Gromov introduced the notion of K\"{a}hler hyperbolicity, i.e., the K\"{a}hler metric whose K\"{a}hler form $\w$ is $\tilde{d}$(bounded). Then he proved the Hopf conjecture in the K\"{a}hler case.  In order to attack Hopf Conjecture in the K\"{a}hlerian case when  $K\leq0$ by extending Gromov’s idea, Cao-Xavier \cite{CX} and Jost-Zuo  \cite{JZ} independently introduced the concept of K\"{a}hler non-ellipticity,  which includes nonpositively curved compact K\"{a}hler manifolds, and showed  that their Euler characteristics have the desired property. In \cite{Huang}, the author proved the Hopf conjecture in some locally conformally K\"{a}hler manifolds case.

For symplectic case, inspired by K\"{a}hler geometry, Tan-Wang-Zhou \cite{TWZ} gave the definition of symplectic parabolic manifold. 
\begin{definition}
A closed almost K\"{a}hler manifold $(M,g,J,\w)$ is called symplectic parabolic if the lift $\tilde{\w}$ of $\w$ to the universal covering $(\tilde{M},\tilde{g},\tilde{J},\tilde{\w})\rightarrow(M, g, J,\w)$ is $d$(sublinear) on $(\tilde{M},\tilde{g},\tilde{J},\tilde{\w})$.
\end{definition}
The authors want to consider Hopf conjecture on a closed symplectic parabolic manifold \cite{TWZ}. One of the powerful tools for Gromov achieving Hopf conjecture on a K\"{a}hler manifold is that the Lefschetz operator $L$ commutes with the Hodge Laplacian operator $\De =dd^{\ast}+d^{\ast}d$. But in general, the Lefschetz operator $L$ does not commute with $\De$ on symplectic manifold. By considering Tseng-Yau’s new symplectic cohomologies on symplectic parabolic manifold, they got some interesting results \cite{TY}. At last, with the hard Lefschetz property which ensures that de Rham cohomology consists with the new symplectic cohomology, they obtained the result as follows.
\begin{theorem}(\cite[Theorem 1.5]{TWZ})\label{T2}
If $(M,\w)$ is a $2n$-dimensional closed symplectic parabolic manifold which 
satisfies the hard Lefschetz property, then the Euler number satisfies 
$(-1)^{n}\chi(M^{2n})\geq0$.
\end{theorem}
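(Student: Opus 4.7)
The plan is to follow the Atiyah–Gromov strategy: reduce the Hopf-type inequality to a vanishing statement for the $L^{2}$ Betti numbers of the universal cover $\tilde M$ in all degrees $k\neq n$, and then concentrate the Euler characteristic in the middle degree. Concretely, by Atiyah's $L^{2}$ index theorem for coverings one has
\begin{equation*}
\chi(M^{2n})=\sum_{k=0}^{2n}(-1)^{k}b^{k}_{(2)}(\tilde M),
\end{equation*}
where $b^{k}_{(2)}(\tilde M)=\dim_{\Gamma}\mathcal{H}^{k}_{(2)}(\tilde M)$. Hence it suffices to prove that $\mathcal{H}^{k}_{(2)}(\tilde M)=0$ whenever $k\neq n$, for then $(-1)^{n}\chi(M^{2n})=b^{n}_{(2)}(\tilde M)\geq 0$.

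The hypothesis that $M$ satisfies the hard Lefschetz property allows us to invoke Theorem \ref{T6}: the universal cover $(\tilde M,\tilde g,\tilde J,\tilde\omega)$ enjoys the $L^{2}$-hard Lefschetz property. Combined with the remark following Theorem \ref{T6}, in the non-compact complete setting Poincar\'e duality upgrades the surjective map to an isomorphism
\begin{equation*}
L^{n-k}\colon H^{k}_{(2)}(\tilde M)\stackrel{\cong}{\longrightarrow} H^{2n-k}_{(2)}(\tilde M)
\end{equation*}
for every $k\leq n-1$. In particular, every nonzero reduced $L^{2}$-harmonic class in degree $k<n$ must have nonzero image under $L^{n-k}$.

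The second ingredient is the symplectic parabolic condition, which says $\tilde\omega=d\eta$ with $\eta$ of at most linear growth. The central technical step—and in my view the main obstacle—is the $L^{2}$ analogue of Gromov's vanishing lemma: if $\alpha$ is a $d$-closed $L^{2}$ form on $\tilde M$, then $\tilde\omega\wedge\alpha$ represents the zero class in the reduced $L^{2}$ cohomology. Naively one writes $\tilde\omega\wedge\alpha=d(\eta\wedge\alpha)$, but $\eta\wedge\alpha$ is in general not $L^{2}$, so one cannot conclude directly. The standard remedy, which should be adaptable here because $\eta$ grows sublinearly, is to cut off $\eta$ by a sequence of Lipschitz functions $\chi_{R}$ supported in geodesic balls $B(x_{0},R)$ with $|\nabla\chi_{R}|\to 0$, and to show that the error terms $d\chi_{R}\wedge\eta\wedge\omega^{\ell}\wedge\alpha$ tend to zero in $L^{2}$ thanks to the sublinear bound on $\eta$. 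This exhibits $\tilde\omega\wedge\alpha$ as an $L^{2}$ limit of exact forms, hence as zero in the reduced $L^{2}$ cohomology. This is the point at which the symplectic parabolic assumption is essential, and where one must work with Tseng–Yau-type cohomology and the $L^{2}$-$dd^{\Lambda}$-lemma of \cite{TWZ} to control the interaction between $L$ and $d$.

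Combining these two ingredients finishes the argument. For $k<n$ pick $\alpha\in\mathcal{H}^{k}_{(2)}(\tilde M)$; iterating the vanishing lemma $(n-k)$ times gives $[L^{n-k}\alpha]=0$ in reduced $L^{2}$ cohomology. The $L^{2}$-hard Lefschetz isomorphism then forces $[\alpha]=0$, so $\alpha=0$. Poincar\'e duality transfers the same conclusion to $k>n$. Hence $b^{k}_{(2)}(\tilde M)=0$ for $k\neq n$, and the $L^{2}$ index formula yields $(-1)^{n}\chi(M^{2n})=b^{n}_{(2)}(\tilde M)\geq 0$, as required.
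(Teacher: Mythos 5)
Your proposal is correct and follows essentially the same route as the paper: Atiyah's $L^{2}$ index theorem, the $L^{2}$-hard Lefschetz property of the universal cover (Theorem \ref{T6}), and the cutoff vanishing argument for $d$(sublinear) forms (Theorem \ref{T8}), which the paper applies once to $\tilde{\omega}^{n-p}=d(\eta\wedge\tilde{\omega}^{n-p-1})$ rather than iterating degree by degree as you do --- an inessential variant, since $L$ descends to reduced $L^{2}$-cohomology. The one slight misattribution is your claim that the $L^{2}$-$dd^{\Lambda}$ lemma is needed inside the vanishing step: it enters only in establishing Theorem \ref{T6} (via Proposition \ref{P4}), while the vanishing lemma itself needs nothing beyond $d\tilde{\omega}=0$, so your argument stands as written.
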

In this article, we use an other way to prove Theorem \ref{T2}. By the Lemma 3 in \cite{CX},  we can also get: Let $M$ be a closed $2n$-Riemannian manifold of non-positive sectional curvature. If $M^{2n}$ is homotopy equivalent with a closed symplectic manifold which satisfies the hard Lefschetz property, then the Euler number of $M$ satisfies the inequality $(-1)^{n}\chi(M^{2n})\geq 0$, see Theorem \ref{T7}.

\section{$L^{2}$-cohomology}
We recall some basic on $L^{2}$ harmonic forms \cite{Car1,Car2}.  Let $M$ be a smooth manifold of dimension $n$, let $\Om^{k}(M)$ and $\Om^{k}_{0}(M)$ denote the smooth $k$-forms on $M$ and the smooth $k$-forms  with compact support on $M$, respectively. We assume now that $M$ is endowed with a Riemannian metric $g$. Let $\langle,\rangle$ denote the pointwise inner product on $\Om^{k}(M)$ given by $g$.  The global inner product is  defined
$$(\a,\b)=\int_{M}\langle\a,\b\rangle d{\rm{Vol}}_{g}.$$
We also write $|\a|^{2}=\langle \a,\a\rangle$, $\|\a\|^{2}=\int_{M}|\a|^{2}dVol_{g}$, and let $$\Om^{k}_{(2)}(M)=\{\a\in\Om^{k}(M):\|\a\|^{2}<\infty \}.$$
The operator of exterior differentiation is $d:\Om^{k}_{0}(M)\rightarrow\Om^{k+1}_{0}(M)$, and it satisfies $d^{2}=0$; its formal adjoint is $d^{\ast}:\Om^{k+1}_{0}(M)\rightarrow\Om^{k}_{0}(M)$; we have
$$\forall\a\in\Om^{k}_{0}(M),\ \forall\b\in\Om^{k+1}_{0}(M),\ \int_{M}\langle d\a,\b\rangle=\int_{M}\langle\a,d^{\ast}\b\rangle.$$
The space $Z_{2}^{k}(M)$ is defined as follows \cite[Section 2]{Car2}:
$Z_{2}^{k}(M)$ is the kernel of the operator $d$ acting on $\Om^{k}_{(2)}(M)$. That is to say
$$Z_{2}^{k}(M)=\{\a\in\Om^{k}_{(2)}(M): d\a=0 \},$$
where  the equation $d\a=0$ has to be understood in the distribution sense, i.e., $\a\in\Z^{k}_{2}(M)$ if only if 
$$\forall\b\in\Om^{k}_{0}(M),\ (\a,d^{\ast}\b)=0.$$
Hence we have 
$$Z_{2}^{k}(M)=\big{(}d^{\ast}(\Om^{k+1}_{0}(M))\big{)}^{\bot}.$$
We can also define
\begin{equation*}
\begin{split}
\mathcal{H}_{(2)}^{k}(M)&= (d^{\ast}(\Om_{0}^{k+1}(M))^{\bot}\cap(d(\Om^{k-1}_{0}(M)))^{\bot}\\
&=Z_{2}^{k}(M)\cap\{\a\in \Om^{k}_{(2)}(M): d^{\ast}\a=0 \}\\
&=\{\a\in \Om^{k}_{(2)}(M): d\a=d^{\ast}\a=0 \}.\\
\end{split}
\end{equation*}
Because the operator $d+d^{\ast}$ is elliptic, we have by elliptic regularity: $\mathcal{H}^{k}_{(2)}(M)\subset\Om^{k}(M)$. We also remark that by the definition we have
$$\forall \a\in\Om^{k-1}_{0}(M),\ \forall\b\in\Om^{k+1}_{0}(M),\ \int_{M}\langle d\a,d^{\ast}\b\rangle=\int_{M}\langle dd\a,\b\rangle=0.$$
Hence $$(d(\Om^{k-1}_{0}(M)))^{\bot}\perp(d^{\ast}(\Om_{0}^{k+1}(M))^{\bot}$$
and we get the Hodge-de Rham-Kodaira orthogonal decomposition of $\Om^{k}_{(2)}(M)$
$$\Om^{k}_{(2)}(M)=\mathcal{H}^{k}_{(2)}(M)\oplus\overline{d(\Om^{k-1}_{0}(M))}\oplus\overline{d^{\ast}(\Om^{k+1}_{0}(M))},$$
where the closure is taken with respect to the $L^{2}$ topology.

We also define the domain of $d$ by
$$\mathcal{D}^{k}(d)=\{\a\in\Om^{k}_{(2)}(M): d\a\in L^{2}\}$$
that is to say $\a\in\mathcal{D}^{k}(d)$ if only if there is a positive constant $C$ such that
$$\forall\b\in\Om^{k}_{0}(M), |(\a,d^{\ast}\b)|\leq C\|\b\|.$$
We remark that we always have $d\mathcal{D}^{k-1}(d)\subset Z^{k}_{2}(M)$. We define the $k$-space of reduced $L^{2}$-cohomology by 
$$H_{(2)}^{k}(M)=Z_{2}^{k}/ \overline{d\mathcal{D}^{k-1}(d)}.$$
The $k$-space of non reduced $L^{2}$-cohomlogy is defined by
$$^{nr}H_{(2)}^{k}(M)=Z_{2}^{k}/ {d\mathcal{D}^{k-1}(d)}.$$
We recall a result in  \cite[Lemma 1.5]{Car1} as follows: 
$$\overline{d\mathcal{D}^{k}(d)}=\overline{\Om^{k}_{0}(M)}.$$
Therefore, the space of harmonic $L^{2}$-forms computes the reduced $L^{2}$-cohomology:
$$H^{k}_{(2)}(M)\simeq\mathcal{H}^{k}_{(2)}(M).$$
From now on, we will refer to reduced $L^{2}$-cohomology as $L^{2}$-cohomology, except where it may cause ambiguity.
\section{$L^{2}$-hard Lefschetz property}
Through out this section, $(M,\w)$ is a complete symplectic manifold. We denote by $\Om^{k}(M)$ the space of smooth $k$-forms on $M$. Let $$L:\Om^{k}(M)\rightarrow\Om^{k+2}(M)$$
denote the Lefschetz operator and  
$$\La:\Om^{k}(M)\rightarrow \Om^{k-2}(M)$$
denote the dual Lefschetz operator. We also define a second differential operator $$d^{\La}:=d\La-\La d.$$ 
Noting that $d^{\La}:\Om^{k}(M)\rightarrow\Om^{k-1}(M)$, decreasing the degree of forms by $1$ \cite{TY}.

We denote by $\mathfrak{X}(M)$  the space of vector filed on $M$. If the almost complex structure $J$ on $M$ satisfies the conditions $\w(X,JX)>0$, $\forall X\in\mathfrak{X}(M)-\{0\}$ and $\w(JX,JY)=\w(X,Y)$, $\forall X,Y\in\mathfrak{X}(M)$, then the almost complex structure $J$ is said to be compatible with the symplectic form $\w$. There is a well-defined Hermitian metric $g(X,Y)=\w(X,JY)$ given by above two conditions. We can use the Hermitian metric $g$ to define a Hodge star operator $\ast$. Therefore, the dual Lefschetz operator $\La$ is then just the adjoint of $L$, i.e, $\La=L^{\ast}=(-1)^{k}\ast L\ast$ \cite{TY}.

A form $\a$ is called symplectic harmonic if it satisfies $d\a=d^{\La}a=0$. Brylinski conjectured that on a closed symplectic manifold, every de Rham cohomology class contains a symplectic harmonic representative. Several years later, his conjecture for closed symplectic manifolds was disproved by Oliver Mathieu \cite{Mathieu}. In fact, Mathieu proved that every de Rham cohomology
$H^{\ast}_{dR}(M)$ class contains a symplectic harmonic form if and only if the symplectic manifold satisfies the hard Lefschetz property.
\begin{proposition}(\cite[Proposition 7]{Mathieu} and \cite{Yan})
Let $(M^{2n},\w)$ be a $2n$-dimensional symplectic manifold (not necessarily compact). Then the following two assertions are equivalent:\\
(1) Any cohomology class contains a harmonic cocycle;\\
(2) For any $k\leq n$, the cup product $L^{k}:H^{n-k}_{dR}(M;\mathbb{R})\rightarrow H^{n+k}_{dR}(M;\mathbb{R})$ is surjective.
\end{proposition}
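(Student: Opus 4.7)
The plan is to adapt the classical proof of the Hard Lefschetz Theorem for compact K\"ahler manifolds (as in \cite{GH}) to the symplectic setting, following Yan's approach in \cite{Yan}. The key algebraic input is the $sl_2$-triple $(L,\Lambda,H)$ acting pointwise on differential forms: the Lefschetz operator $L$, its adjoint $\Lambda$, and $H=[L,\Lambda]$ which acts on $\Omega^{k}(M)$ as multiplication by $k-n$. Every $k$-form admits a Lefschetz decomposition $\alpha=\sum_{r}L^{r}\alpha_{r}$ into primitive components $\alpha_{r}$ (annihilated by $\Lambda$) of degree $k-2r$, and pointwise $L^{j}:\Omega^{n-j}(M)\to\Omega^{n+j}(M)$ is an isomorphism. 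The operator $d^{\Lambda}=[d,\Lambda]$ anticommutes with $d$, and the identity $d^{\Lambda}\alpha=d\Lambda\alpha-\Lambda d\alpha$ shows that whenever $\alpha$ is closed, $d^{\Lambda}\alpha=d(\Lambda\alpha)$ is automatically exact.

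For the direction $(1)\Rightarrow(2)$: given $[\gamma]\in H^{n+k}_{dR}(M)$ with $1\leq k\leq n$, pick a symplectic harmonic representative $\beta$. By pointwise hard Lefschetz, write $\beta=L^{k}\delta$ with $\delta\in\Omega^{n-k}(M)$ uniquely determined. The two conditions $d\beta=0$ and $d^{\Lambda}\beta=0$, when expanded in the Lefschetz decomposition of $\delta$ and combined with the commutation $[L,d]=0$ together with the algebraic formulas for $\Lambda L^{j}$ acting on primitive forms, force $d\delta=0$. Hence $[\delta]\in H^{n-k}_{dR}(M)$ is a well-defined class with $L^{k}[\delta]=[\gamma]$, establishing surjectivity.

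For the direction $(2)\Rightarrow(1)$: let $\alpha$ be a closed $k$-form; we construct a symplectic harmonic representative within $[\alpha]$. Modifying $\alpha$ by an exact form $d\eta$, the equation $d^{\Lambda}(\alpha+d\eta)=0$ becomes, using $dd^{\Lambda}+d^{\Lambda}d=0$, the equation $d(d^{\Lambda}\eta-\Lambda\alpha)=0$. So we must realise $[\Lambda\alpha]\in H^{k-2}_{dR}(M)$ as $[d^{\Lambda}\eta]$ for some $\eta$. This is achieved by peeling off the Lefschetz decomposition of $\alpha$ component by component: the surjectivity of $L^{n-j}$ in the appropriate degree allows one to solve the required cohomological equation at each step, and iteration produces an $\eta$ making the modified form symplectic harmonic.

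The main obstacle lies in the $(2)\Rightarrow(1)$ direction, where one must carefully track which Lefschetz surjectivity hypothesis is invoked at each step of the induction, and verify that the surjectivity of $L^{n-k}:H^{k}_{dR}(M)\to H^{2n-k}_{dR}(M)$ for all $k\leq n-1$ is exactly enough to close the argument without circularity. The delicate point is that at each inductive step one solves a cohomological equation whose obstruction lives in a Lefschetz image at a shifted degree, so the bookkeeping of the surjectivity hypotheses, combined with the explicit formulas for $[\Lambda,L^{j}]$ on primitive forms, forms the technical heart of the proof.
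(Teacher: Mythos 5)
Your direction $(1)\Rightarrow(2)$ is sound and coincides with the argument the paper uses in the $L^{2}$ setting: a symplectic harmonic representative $\beta\in\Omega^{n+k}(M)$ equals $L^{k}\delta$ for a unique $\delta\in\Omega^{n-k}(M)$ by the pointwise Lefschetz isomorphism, and since $d\beta=d^{\Lambda}\beta=0$ forces all primitive components of $\beta$ (equivalently, of $\delta$) to be $d$-closed (this is Proposition \ref{P2}), $\delta$ is itself symplectic harmonic and $L^{k}[\delta]=[\gamma]$. That is precisely the mechanism of Lemma \ref{L3} together with the commutative diagram in the proof of Theorem \ref{T4}.

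The direction $(2)\Rightarrow(1)$, however, contains a genuine gap, and it is the direction that carries the theorem. First, your reformulation is ill-posed: $\Lambda\alpha$ is not closed in general, so ``$[\Lambda\alpha]\in H^{k-2}_{dR}(M)$'' is not a well-defined cohomology class. What the equation $d^{\Lambda}(\alpha+d\eta)=0$ actually demands is that the form $d^{\Lambda}\alpha=d(\Lambda\alpha)$, which is $d$-exact and $d^{\Lambda}$-closed, be written as $dd^{\Lambda}\eta$; that is verbatim the $dd^{\Lambda}$-lemma, and deducing the $dd^{\Lambda}$-lemma from Lefschetz surjectivity is a theorem of the same depth as the one you are proving (it is one of the equivalences of Mathieu, Yan, Guillemin and Angella--Tomassini quoted in Section 3), so this route begs the question unless carried out in full. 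Second, the ``peeling off the Lefschetz decomposition \dots\ iteration produces an $\eta$'' step is never specified, and your closing paragraph concedes that exactly this bookkeeping is the technical heart: as written, the proof stops where the work begins. Yan's argument, mirrored in the paper's proof of Theorem \ref{T4}, avoids solving the $d^{\Lambda}$-equation altogether. One inducts on the degree: for degree $\leq 1$, closed forms are automatically symplectic harmonic since $d^{\Lambda}\alpha=-\Lambda d\alpha$; hypothesis (2) is invoked once, to obtain the splitting $H^{k}=P^{k}\oplus L(H^{k-2})$ (the de Rham analogue of Lemma \ref{L4}); classes in $\mathrm{Im}\,L$ acquire symplectic harmonic representatives from the inductive hypothesis because $L$ preserves symplectic harmonicity (via $[d^{\Lambda},L]=d$); and for a primitive class $v=[z]\in P^{k}$ one writes $z\wedge\omega^{n-k+1}=d\gamma$ and uses the \emph{pointwise}, purely linear-algebraic surjectivity of $L^{n-k+1}:\Omega^{k-1}(M)\rightarrow\Omega^{2n-k+1}(M)$ to write $\gamma=\theta\wedge\omega^{n-k+1}$, whence $u=z-d\theta$ represents $v$, is closed, and satisfies $u\wedge\omega^{n-k+1}=0$, i.e., is closed and primitive, hence symplectic harmonic by Lemma \ref{L2}. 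The idea your sketch is missing is precisely this replacement of a cohomological solving procedure, whose obstruction you cannot control, by the pointwise surjectivity of Lefschetz powers on forms.
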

We denote by $H^{k}_{(2);hr}(M,\w)=\{\a\in\Om^{k}_{(2)}(M): d\a=d^{\La}\a=0\}$ the space of the  $L^{2}$ symplectic harmonic $k$-forms.  We call that a symplectic manifold $(M,\w)$ of dimension $2n$ is said to be $L^{2}$-hard Lefschetz property, if the map $L^{n-k}: H^{k}_{(2)}(M)\rightarrow H^{2n-k}_{(2)}(M)$ is an onto for all $k\leq (n-1)$. 
\begin{definition}
A differential $k$-form $B_{k}$ with $k\leq n$ is called primitive, i.e., $B_{k}\in P^{k}(M)$, if it satisfies the two equivalent conditions: (i) $\La B_{k}=0$; (ii) $L^{n-k+1}B_{k}=0$.
\end{definition} 
Noting that $d^{\La}B_{k}=[d,\La]B_{k}=-\La dB_{k}$ for any $B_{k}\in P^{k}(M)$, we then have
\begin{lemma}\label{L2}
Any closed $L^{2}$-primitive form is $L^{2}$ symplectic harmonic. 
\end{lemma}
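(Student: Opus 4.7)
The plan is to observe that the statement is essentially immediate from the definition $d^{\La}:=d\La-\La d$ together with the two defining properties of a closed primitive form. By the definition of symplectic harmonicity, I need to verify only that $d^{\La}\a = 0$ for any $\a \in \Om^{k}_{(2)}(M)$ satisfying $d\a = 0$ and $\La\a = 0$; the $L^{2}$ condition plays no role in this pointwise algebraic identity and is merely inherited by $\a$.

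The key step is to expand
\[
d^{\La}\a = d\La\a - \La d\a.
\]
The primitivity assumption $\La\a = 0$ kills the first term, while the closedness assumption $d\a = 0$ kills the second. Hence $d^{\La}\a = 0$, and combined with $d\a = 0$ this shows $\a \in H^{k}_{(2);hr}(M,\w)$. Equivalently, one can simply cite the identity $d^{\La}B_{k} = -\La d B_{k}$ on primitive forms (already recorded in the paragraph immediately preceding the lemma) and note that the right-hand side vanishes when $dB_{k} = 0$.

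There is no genuine obstacle here: the result is a direct algebraic consequence of the bracket formula $d^{\La} = [d,\La]$ applied to a form on which both $\La$ and $d$ act trivially. The $L^{2}$ hypothesis enters only through the ambient space $\Om^{k}_{(2)}(M)$ in which $\a$ lives, ensuring that the resulting symplectic harmonic form belongs to $H^{k}_{(2);hr}(M,\w)$ as defined in the introduction.
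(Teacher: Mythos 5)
Your proposal is correct and coincides with the paper's own argument: the paper proves the lemma precisely by the observation, recorded in the sentence immediately before the statement, that $d^{\La}B_{k}=[d,\La]B_{k}=-\La dB_{k}$ for primitive $B_{k}$, so closedness forces $d^{\La}B_{k}=0$. Your expansion $d^{\La}\a=d\La\a-\La d\a$ with $\La\a=0$ and $d\a=0$ is exactly this computation, and your remark that the $L^{2}$ condition is merely inherited matches the paper's treatment.
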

Given any $k$-form, there is a unique Lefschetz decomposition into primitive forms \cite{TY}. Explicitly, we shall write
\begin{equation}\label{E2}
 A_{k}=\sum_{r\geq\max(k-n,0)}\frac{1}{r!}L^{r}B_{k-2r},
\end{equation}
where each $B_{k-2r}$ can be written in terms of $A_{k}$ as
\begin{equation}\label{E4}
B_{k-2r}=\Phi_{(k,k-2r)}(L,\La)A_{k}\equiv(\sum_{s=0}a_{r,s}\frac{1}{s}!L^{s}\La^{r+s})A_{k},
\end{equation}
where the operator $\Phi_{(k,k-2r)}(L,\La)$ is a linear combination of $L$ and $\La$ with the rational coefficients $a_{r,s}$'s dependent only on $(d, k, r)$. Consider now the action of the differential operators on a $k$-form written in Lefschetz decomposed form of \ref{E2}, see \cite{TY} Section 2.2. We have
\begin{equation}\label{E5}
dA_{k}=\sum \frac{1}{r!}dB_{k-2r},\ d^{\La}A_{k}=\sum \frac{1}{r!}L^{r}(dB_{k-2r-2}+d^{\La}B_{k-2r}),
\end{equation} 
where $B_{k}\in P^{k}(M)$. The first is  simply due to the fact that $d$ commute with $L$. The second follows from commuting $d^{\La}$ through $L^{r}$ and repeatedly applying the relation that $[d^{\La},L]=d$.
\begin{proposition}(\cite[Proposition 2.6]{TY})\label{P2}
Let $A_{k}\in\Om^{k}(M)$ and $B_{k-2r}\in P^{\ast}(M)$ be its Lefschetz decomposed primitive forms. Then $dA_{k}=d^{\La}A_{k}=0$ if and only if $dB_{k-2r}=0$, for all $r$.	
\end{proposition}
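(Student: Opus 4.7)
The plan is to prove both directions by exploiting the uniqueness of the Lefschetz decomposition together with the formulas in (\ref{E5}). The backward implication is essentially automatic: if $dB_{k-2r}=0$ for all $r$, then the relation $d^{\La}B_{k-2r}=-\La dB_{k-2r}$, valid because $B_{k-2r}$ is primitive (this is the mechanism behind Lemma \ref{L2}), gives $d^{\La}B_{k-2r}=0$ as well, and substitution into (\ref{E5}) yields $dA_k=d^{\La}A_k=0$.

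For the forward direction, the key preliminary step is the following refinement: for any primitive $j$-form $B_j$, the differential admits a two-term primitive decomposition
\begin{equation*}
dB_j = (dB_j)^+ + L\,(dB_j)^-, \qquad (dB_j)^+\in P^{j+1}(M),\ (dB_j)^-\in P^{j-1}(M).
\end{equation*}
To see this, $L^{n-j+1}B_j=0$ together with $[d,L]=0$ forces $L^{n-j+1}(dB_j)=0$; writing $dB_j$ in its full Lefschetz decomposition and applying uniqueness of that decomposition to the vanishing identity kills every primitive component of degree at most $j-3$. A short computation using primitivity of $(dB_j)^\pm$ and the commutator of $\La$ with $L$ then yields $d^{\La}B_j=-(n-j+1)(dB_j)^-$.

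Now assuming $dA_k=0$ and $d^{\La}A_k=0$, I would substitute these identities into (\ref{E5}), redistribute the extra factor of $L$ coming from the $(dB_{k-2r})^-$ pieces, and re-index so that all terms with the same $L$-power and primitive degree are grouped together. Uniqueness of the Lefschetz decomposition then extracts, from $dA_k=0$, the relations $(dB_k)^+=0$ and $(dB_{k-2r})^+=-r\,(dB_{k-2r+2})^-$ for $r\ge 1$; and from $d^{\La}A_k=0$, the relations $(dB_{k-2r-2})^+=(n-k+r+1)\,(dB_{k-2r})^-$ for $r\ge 0$. Eliminating $(dB_{k-2r-2})^+$ between the two families gives
\begin{equation*}
(2r+n-k+2)\,(dB_{k-2r})^- = 0,
\end{equation*}
and because $r$ lies in the admissible Lefschetz range $\max(0,k-n)\le r\le \lfloor k/2\rfloor$, the coefficient $2r+n-k+2$ is at least $2$. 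Hence $(dB_{k-2r})^-=0$ for every $r$, and the first family of relations then forces $(dB_{k-2r})^+=0$ as well, giving $dB_{k-2r}=0$.

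The main obstacle I anticipate is the bookkeeping in the coefficient-extraction step: one must carefully re-index two double sums, collect terms of matching $L$-power and primitive degree, and correctly track the scalars produced by commuting $\La$ past $L$ on primitive forms. Once the two linear identities are in hand, the elimination is immediate and the strict positivity of $2r+n-k+2$ throughout the admissible range closes the argument.
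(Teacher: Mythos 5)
Your proposal is correct, but the forward direction follows a genuinely different route from the paper's. The paper disposes of ($dA_{k}=d^{\La}A_{k}=0\Rightarrow dB_{k-2r}=0$) in two lines: it applies $d$ to the inversion formula (\ref{E4}), $B_{k-2r}=\Phi_{(k,k-2r)}(L,\La)A_{k}$, and commutes $d$ through the polynomial in $L$ and $\La$ using $[d,L]=0$ and $d\La=\La d+d^{\La}$ (together with $[d^{\La},\La]=0$), so that every resulting term carries either $dA_{k}$ or $d^{\La}A_{k}$ and hence vanishes. You instead first establish the structural fact that the differential of a primitive $j$-form has a two-term Lefschetz decomposition $dB_{j}=(dB_{j})^{+}+L(dB_{j})^{-}$ with $d^{\La}B_{j}=-(n-j+1)(dB_{j})^{-}$, then substitute into (\ref{E5}) and extract linear relations via uniqueness of the Lefschetz decomposition. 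I checked your bookkeeping: grouping by $L$-power does yield exactly $(dB_{k})^{+}=0$, $(dB_{k-2r})^{+}=-r\,(dB_{k-2r+2})^{-}$ for $r\geq1$, and $(dB_{k-2r-2})^{+}=(n-k+r+1)(dB_{k-2r})^{-}$ for $r\geq0$ (one should set $B_{k-2r}=0$ outside the admissible range so that both families hold for every $r$, including at the top of the range), and elimination gives $(n-k+2r+2)(dB_{k-2r})^{-}=0$ with coefficient at least $2$ throughout, exactly as you claim; your backward direction coincides with the paper's. Two remarks: first, your argument silently corrects a typo in (\ref{E5}) --- the first identity should read $dA_{k}=\sum\frac{1}{r!}L^{r}dB_{k-2r}$, which is the version your coefficient-matching actually uses; second, your route is essentially Tseng--Yau's original derivation, so it is longer but self-contained, whereas the paper's brevity hides all the combinatorics inside the cited operator $\Phi_{(k,k-2r)}$, and as a byproduct your version makes explicit the useful formula for $d^{\La}$ of a primitive form, which the paper uses only implicitly.
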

\begin{proof}
Starting with (\ref{E4}), we apply the exterior derivative $d$ to it. Commuting $d$ through $\Phi_{(k,k-2r)}(L,\La)$, the $d$- and $d^{\La}$-closedness of $A_{k}$ immediately implies $dB_{k-2r}=0$. Assume now $dB_{k-2r}=0$, for all $B_{k-2r}$. Note that this trivially also implies $d^{\La}B_{k-2r}=-\La dB_{k-2r}=0$. With the Equations in (\ref{E5}), we therefore find $dA_{k}=d^{\La}A_{k}=0$.	
\end{proof}
By the Proposition \ref{P2}, we prove the duality on $L^{2}$ symplectic harmonic forms.
\begin{lemma}\label{L3}
Let $(M,g,J,\w)$ be a $2n$-dimensional complete non-compact almost K\"{a}hler manifold. Then the map 
$$L^{n-k}: H^{k}_{(2),hr}(M,\w)\rightarrow H^{2n-k}_{(2),hr}(M,\w)$$
is an isomorphism for $k\geq0$.
\end{lemma}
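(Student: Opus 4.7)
The plan is to prove the lemma by exploiting the Lefschetz decomposition into primitive components together with Proposition \ref{P2}. By the duality between degrees $k$ and $2n-k$ it suffices to treat the case $k\leq n$; the case $k\geq n$ then follows by running the same argument on $2n-k$.

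Given any $A_{k}\in H^{k}_{(2),hr}(M,\w)$, I would write its pointwise Lefschetz decomposition $A_{k}=\sum_{r=0}^{\lfloor k/2\rfloor}\tfrac{1}{r!}L^{r}B_{k-2r}$ with $B_{k-2r}\in P^{k-2r}(M)$. Since the operators $\Phi_{(k,k-2r)}(L,\La)$ in (\ref{E4}) are pointwise algebraic in $L$ and $\La$, each $B_{k-2r}$ is again $L^{2}$, and Proposition \ref{P2} gives $dB_{k-2r}=0$ for all $r$. Applying $L^{n-k}$ then yields
$$L^{n-k}A_{k}=\sum_{r\geq0}\frac{1}{r!}L^{n-k+r}B_{k-2r},$$
which is already in Lefschetz-decomposed form for a $(2n-k)$-form: each exponent $n-k+r$ satisfies $n-k+r\geq\max((2n-k)-n,0)=n-k$, and the primitive pieces are (proportional to) the $B_{k-2r}$. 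These primitive pieces are $L^{2}$ and $d$-closed, so Proposition \ref{P2} again gives $L^{n-k}A_{k}\in H^{2n-k}_{(2),hr}(M,\w)$.

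For injectivity, if $L^{n-k}A_{k}=0$, then uniqueness of the Lefschetz decomposition forces $L^{n-k+r}B_{k-2r}=0$ for every $r$; because $n-k+r\leq n-(k-2r)$ when $r\geq0$, the operator $L^{n-k+r}$ is injective on $P^{k-2r}$, so each $B_{k-2r}$ vanishes and $A_{k}=0$. For surjectivity, given $C\in H^{2n-k}_{(2),hr}(M,\w)$, decompose $C=\sum_{s\geq n-k}\tfrac{1}{s!}L^{s}B'_{2n-k-2s}$; setting $s=n-k+r$, the primitive components are $B'_{k-2r}\in P^{k-2r}(M)$, and they are $L^{2}$ and $d$-closed by Proposition \ref{P2} applied to $C$. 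Then
$$A_{k}:=\sum_{r\geq0}\frac{1}{(n-k+r)!}L^{r}B'_{k-2r}$$
is a finite $L^{2}$-sum of primitive $d$-closed forms, so it lies in $H^{k}_{(2),hr}(M,\w)$ by the ``if'' direction of Proposition \ref{P2}, and a direct term-by-term check gives $L^{n-k}A_{k}=C$.

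The step requiring most care is verifying that the primitive decomposition is compatible with the $L^{2}$ condition. This reduces to the pointwise identity $|L^{j}B_{m}|^{2}=c(m,j,n)\,|B_{m}|^{2}$ for primitive $m$-forms on an almost K\"ahler manifold, a standard fact from symplectic linear algebra which makes both $L^{j}$ and the decomposition operators $\Phi_{(k,k-2r)}(L,\La)$ pointwise bounded on primitive pieces, so that $L^{2}$-ness transfers freely between $A_{k}$ and its primitive components. Apart from this bookkeeping the argument is structurally identical to Yan's proof of the compact case \cite{Yan}.
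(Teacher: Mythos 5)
Your proof is correct and takes essentially the same route as the paper's: both arguments extract the $d$-closed primitive pieces via Proposition \ref{P2} and invert $L^{n-k}$ term by term in the Lefschetz decomposition to obtain surjectivity, with injectivity coming from the pointwise injectivity of $L^{n-k}$ (which your decomposition-uniqueness argument simply reproves in expanded form). Your explicit verification that the pointwise identity $|L^{j}B_{m}|^{2}=c(m,j,n)|B_{m}|^{2}$ keeps the primitive components in $L^{2}$ fills in a bookkeeping step the paper leaves implicit, where it only remarks that $\w$ is bounded.
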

\begin{proof}
Since $\w$ is bounded in $M$, the map $L^{n-k}:\Om^{k}_{(2)}(M)\rightarrow\Om^{2n-k}_{(2)}(M)$ is well define.  First, by the Jocobi identity, we observe that 
$$[d^{\La},L^{i}]=[[d,\La],L^{i}]=[[\La,L^{i}],d]=i(n+1-k-i)[L^{i-1},d],$$
here we use the identity (see \cite{Huybrechts} Corollary 1.2.28), for any $k$-form on $M$,
$$[L^{i},\La](\cdot)=i(k-n+i-1)L^{i-1}(\cdot).$$
Therefore, we have
$$d(L^{k}\a)=0,\ d^{\La}(L^{k}\a)=0,\ \forall\a\in H^{k}_{(2),hr}(M,\w).$$
Suppose that $A_{2n-k}=\sum_{r\geq n-k}\frac{1}{r!}L^{r}B_{k-2r}$ satisfies $dA_{2n-k}=d^{\La}A_{2n-k}=0$, following Proposition 2.6, it implies that $dB_{k-2r}=0$. Note that $d^{\La}B_{k-2r}=-\La dB_{k-2r}=0$. Therefore, we have $dL^{i}B_{k-2r}=0$ and $d^{\La}L^{i}B_{k-2r}=0$. Hence $A_{k}:=\sum_{r\geq n-k}\frac{1}{r!}L^{r-n+k}B_{k-2r}$ is in $H^{k}_{(2),hr}(M,\w)$ and $A_{2n-k}=L^{n-k}A_{k}$. This proves that the homomorphism is surjective. The homomorphism is also injective, since the map $L^{n-k}$ is injective.  
\end{proof} 
Before going on to the study of the $L^{2}$-hard Lefschetz property, we need
to recall the splitting of the cohomology groups in terms of the primitive classes proved by Yan \cite{Yan} for hard Lefschetz symplectic manifolds.
\begin{lemma}\label{L4}
Let $(M,g,J,\w)$ be a $2n$-dimensional complete non-compact almost K\"{a}hler manifold. If $M$ has $L^{2}$-hard Lefschetz property, then there is a splitting
$$H^{k}_{(2)}=P^{k}_{(2)}\oplus L(H^{k-2}_{(2)}(M))$$
where $P^{k}_{(2)}(M)$ is given by $P^{k}_{(2)}(M)=\{v\in H^{k}_{(2)}(M): L^{n-k+1}v=0\}$, for all $k\leq n$.
\end{lemma}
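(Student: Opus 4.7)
The plan is to reproduce Yan's splitting argument for the classical Lefschetz decomposition, now at the level of $L^{2}$-cohomology. The $L^{2}$-hard Lefschetz hypothesis provides surjectivity of $L^{n-j}: H^{j}_{(2)}(M) \to H^{2n-j}_{(2)}(M)$ for all $j \le n-1$, and by the remark following Theorem \ref{T6} these maps are in fact isomorphisms on the complete non-compact manifold $M$, Poincar\'e duality upgrading surjectivity to bijectivity. Both statements will be invoked.

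To show $H^{k}_{(2)}(M) = P^{k}_{(2)}(M) + L(H^{k-2}_{(2)}(M))$, fix $[\alpha] \in H^{k}_{(2)}(M)$ with $k \le n$ and push it forward by $L^{n-k+1}$ to land in $H^{2n-k+2}_{(2)}(M)$. Since $k-2 \le n-2 \le n-1$, the hypothesis yields surjectivity of $L^{n-k+2}: H^{k-2}_{(2)}(M) \to H^{2n-k+2}_{(2)}(M)$, so one may choose a preimage $[\gamma] \in H^{k-2}_{(2)}(M)$ of $L^{n-k+1}[\alpha]$. Setting $[\beta] := [\alpha] - L[\gamma]$, a direct check gives $L^{n-k+1}[\beta] = L^{n-k+1}[\alpha] - L^{n-k+2}[\gamma] = 0$, so $[\beta] \in P^{k}_{(2)}(M)$ and $[\alpha] = [\beta] + L[\gamma]$.

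Directness of the sum is handled as follows. Given $[\alpha] \in P^{k}_{(2)}(M) \cap L(H^{k-2}_{(2)}(M))$, write $[\alpha] = L[\gamma]$. Primitivity of $[\alpha]$ gives $L^{n-k+1}[\alpha] = 0$, equivalently $L^{n-k+2}[\gamma] = 0$ in $H^{2n-k+2}_{(2)}(M)$. Invoking the isomorphism of $L^{n-k+2}$ on $L^{2}$-cohomology from the remark, one concludes $[\gamma] = 0$, whence $[\alpha] = L[\gamma] = 0$.

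The only real obstacle is the injectivity of $L^{n-k+2}$ used in the directness step, which is not delivered by the $L^{2}$-hard Lefschetz assumption itself (that supplies only surjectivity). The cleanest route is to quote the remark after Theorem \ref{T6}. Alternatively, one could deduce injectivity by combining Theorem \ref{T4}---which says that under the hypothesis every $L^{2}$-harmonic class contains an $L^{2}$ symplectic harmonic representative---with Lemma \ref{L3}, where $L^{n-k}$ is shown to be an isomorphism on the space of $L^{2}$ symplectic harmonic forms; this transfers to an isomorphism on the full $L^{2}$-cohomology.
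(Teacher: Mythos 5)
Your proof is correct and follows essentially the same route as the paper: the paper's proof of Lemma \ref{L4} is exactly this Yan-style argument, producing the primitive component as $\alpha - L\beta$ via surjectivity of $L^{n-k+2}: H^{k-2}_{(2)}(M)\to H^{2n-k+2}_{(2)}(M)$, and proving $P^{k}_{(2)}(M)\cap \mathrm{Im}\, L = 0$ via injectivity of that map on cohomology --- which the paper justifies somewhat loosely (and with an index typo) by citing bijectivity of $L^{n-k+2}:\Omega^{k-2}\to\Omega^{2n-k+2}$ at the level of forms, whereas your appeal to the Poincar\'e-duality remark after Theorem \ref{T6} is the cleaner justification of the same step. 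One caution: your suggested alternative route to injectivity, via Theorem \ref{T4} combined with Lemma \ref{L3}, would be circular, since the paper's proof of the implication $(2)\Rightarrow(1)$ of Theorem \ref{T4} invokes Lemma \ref{L4} itself; stick with the Poincar\'e-duality argument.
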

\begin{proof}
First, let us see that $P^{k}_{(2)}(M)\cap \textit{Im}L=0$. Take $\a\in P^{k}_{(2)}(M)$ with $\a=L\b$, $\b\in H^{k-2}_{(2)}(M)$. Then $L^{n-k+2}\b=L^{n-k+1}\a=0$. Since the map $L^{n-k+1}:\Om^{k-2}\rightarrow\Om^{2n-k+2}$ is bijective for $k-2\leq n$, $\b=0$ and hence $\a=0$.

Now let us consider $\a\in H^{k}_{(2)}(M)$ with $k\leq n$, and take the element $L^{n-k+1}\a\in H^{2n-k+2}_{(2)}(M)$. If $L^{n-k+1}\a$ is the zero class, then $\a\in P_{(2)}^{k}(M)$, and the lemma is proved. If  $L^{n-k+1}\a$ is non-zero, then there exists $\b\in H^{k-2}_{(2)}(M)$ such that $L^{n-k+1}\a=L^{n-k+2}\b$ since $(M,\w)$ is $L^{2}$-hard Lefschetz and so map $L^{n-k+2}: H^{k-2}_{(2)}(M)\rightarrow H^{2n-k+2}_{(2)}$ is an isomorphism.  Hence $\a-L\b\in P^{k}_{(2)}(M)$. But $\a=(\a-L\b)+L\b$, which lies in $P^{k}_{(2)}(M)\oplus\textit{Im}L$.
\end{proof}
\begin{proof}[\textbf{Proof of Theorem \ref{T4}}.]
$(1)\Rightarrow(2)$. Consider the following commutative diagram,\\
\centerline{
\xymatrix{
H_{(2),hr}^{n-k}(M)\ar[r]^-{L^{k}}\ar[d]& H^{n+k}_{(2),hr}(M)\ar[d] \\
H^{n-k}_{(2)}(M) \ar[r]^-{L^{k} } & H^{n+k}_{(2)}(M) 
		}
}
where the two vertical arrows are surjective. It follows from Lemma \ref{L3} that the second horizontal arrow is also surjective.
	
$(2)\Rightarrow(1)$. Now we assume that $L^{k}: H^{n-k}_{(2)}(M)\rightarrow H^{n+k}_{(2)}(M)$ is surjective for all $k\leq n$. Following Lemma \ref{L4}, it implies that at $H^{n-k}_{(2)}(M)=\textit{Im}L+P^{n-k}_{(2)}$.

The rest of the proof is an induction on the degree of cohomology classes on $M$. It is easy to see that any $L^{2}$-harmonic form $\a$ of degree $\leq 1$ must be $L^{2}$ symplectic harmonic form, since $$d^{\La}\a=-\La(d\a)=0.$$
We suppose that when $r<n-k$, any class $\b\in H^{r}_{(2)}(M)$ contains a $L^{2}$ symplectic harmonic form. We need to show that any class in $H^{n-k}_{(2)}(M)$ also contains a symplectic $L^{2}$-harmonic form. By induction, we already know that any class in $\textit{Im}L$ contains a $L^{2}$ symplectic harmonic form. So it sufficies to show that any cohomology class $v\in P_{n-k}$ contains a symplectic $L^{2}$-harmonic.
	
Let $v=[z]$, $z\in\Om^{n-k}_{(2)}(M)$ is a closed form. Since $[v\wedge\w^{k+1}]=0$ in $H_{(2)}^{n+k+2}(M)$, there is a sequence $(n+k+1)$-forms $\{\gamma_{i}\}\in\Om^{n+k+1}_{(2)}(M)$ such that $$z\wedge\w^{k+1}=\lim d\gamma_{i}.$$ 
Since $L^{k+1}:\Om^{n-k-1}_{(2)}(M)\rightarrow\Om^{n+k+1}_{(2)}(M)$ is onto, we can pick $\theta_{i}\in\Om^{n-k-1}_{(2)}(M)$ such that $$\gamma_{i}=\theta_{i}\wedge\w^{k+1}.$$ 
Then $(z-\lim d\theta_{i})\wedge\w^{k+1}=0$. Thus, if we write $u$ for $z-\lim d\theta_{i}$, then $[u]=[z]=v$ and $L^{k+1}u=0$. Hence $u$ is primitive and closed. According to Lemma \ref{L2}, $u$ is also symplitically harmonic. 
\end{proof}
Following the proof of Theorem \ref{T4}, we have
\begin{corollary}\label{C1}
Let $(M,g,J,\w)$ be a $2n$-dimensional complete non-compact almost K\"{a}hler manifold and $k=0,1,2$. Then any classes in $H^{k}_{(2)}(M)$ contains a $L^{2}$ symplectic harmonic form.
\end{corollary}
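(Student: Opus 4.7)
The plan is to specialize the proof of Theorem~\ref{T4}~(2)$\Rightarrow$(1) to degrees $k \leq 2$ and observe that no genuine Lefschetz hypothesis is needed at these degrees. The cases $k = 0, 1$ are handled exactly as the base case of the induction in Theorem~\ref{T4}: for any $\alpha \in \mathcal{H}^{k}_{(2)}(M)$ with $k \leq 1$, we have $\Lambda\alpha = 0$ purely for degree reasons, so $d^{\Lambda}\alpha = -\Lambda d\alpha = 0$ follows from $d\alpha = 0$. Hence the canonical $L^{2}$-harmonic representative of any class in $H^{0}_{(2)}(M)$ or $H^{1}_{(2)}(M)$ is already $L^{2}$ symplectic harmonic.

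The substantive step is $k = 2$, and rests on a single observation: on any complete connected almost K\"ahler manifold the map $L^{n} \colon H^{0}_{(2)}(M) \to H^{2n}_{(2)}(M)$ is automatically an isomorphism. Indeed, $\mathcal{H}^{0}_{(2)}(M)$ is the space of constant $L^{2}$ functions, and a direct computation shows $\mathcal{H}^{2n}_{(2)}(M)$ is the space of constant multiples of $\mathrm{vol}_{g}$; both vanish when $\mathrm{vol}(M) = \infty$ and both are canonically $\mathbb{R}$ when $\mathrm{vol}(M) < \infty$, and $L^{n}$ sends $c \mapsto c\omega^{n} = c\, n!\, \mathrm{vol}_{g}$, matching the two models. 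Given this, the proof of Lemma~\ref{L4} specialized to $k = 2$ goes through verbatim (its only nontrivial input is the bijectivity of this particular map on cohomology) and yields the splitting $H^{2}_{(2)}(M) = P^{2}_{(2)}(M) \oplus L\bigl(H^{0}_{(2)}(M)\bigr)$.

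It then suffices to produce symplectic harmonic representatives in each summand. For $v = L[f]$ with $f$ locally constant, the form $f\omega$ already lies in $v$ and is $L^{2}$ symplectic harmonic, as $d(f\omega) = 0$ and $d^{\Lambda}(f\omega) = d(nf) = 0$. For a primitive class $v \in P^{2}_{(2)}(M)$ I copy the construction from the proof of Theorem~\ref{T4}: pick any closed $L^{2}$ representative $z \in v$; primitivity gives $z \wedge \omega^{n-1} = \lim d\gamma_{i}$ for some $\gamma_{i} \in \Omega^{2n-1}_{(2)}(M)$; since the pointwise Lefschetz isomorphism $L^{n-1} \colon \Lambda^{1}T^{*}_{x}M \to \Lambda^{2n-1}T^{*}_{x}M$ is uniformly bounded (as $|\omega|_{g} = \sqrt{n}$ on an almost K\"ahler manifold), we may write $\gamma_{i} = \theta_{i} \wedge \omega^{n-1}$ with $\theta_{i} \in \Omega^{1}_{(2)}(M)$; then $u := z - \lim d\theta_{i}$ is closed and primitive, hence $L^{2}$ symplectic harmonic by Lemma~\ref{L2}.

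The main technical point I expect to spend time on is justifying that $L^{n} \colon H^{0}_{(2)} \to H^{2n}_{(2)}$ is always an isomorphism, since this is precisely what replaces the $L^{2}$-hard Lefschetz assumption used in Theorem~\ref{T4}. It is also the reason the statement stops at $k = 2$: for $k \geq 3$ one would instead need $L^{n-k+2} \colon H^{k-2}_{(2)}(M) \to H^{2n-k+2}_{(2)}(M)$ to be an isomorphism, which is a genuine Lefschetz-type hypothesis rather than a formal consequence of the underlying geometry.
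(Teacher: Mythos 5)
Your proposal is correct and takes essentially the same route as the paper, whose entire proof of this corollary is the pointer ``Following the proof of Theorem \ref{T4}'': degrees $k=0,1$ are that induction's base case and $k=2$ is its first step, exactly as you argue, and your observation that $L^{n}\colon H^{0}_{(2)}(M)\to H^{2n}_{(2)}(M)$ is automatically an isomorphism (both sides vanish when $\mathrm{vol}(M)=\infty$, and are spanned by the constants, resp.\ by $\mathrm{vol}_{g}$, when $\mathrm{vol}(M)<\infty$) supplies precisely the detail the paper leaves implicit, since Lemma \ref{L4} is otherwise conditional on the $L^{2}$-hard Lefschetz property. One caveat you inherit verbatim from the paper's own proof of Theorem \ref{T4}: writing $u=z-\lim d\theta_{i}$ presumes that $d\theta_{i}$ converges in $L^{2}$, whereas only $d\theta_{i}\wedge\omega^{n-1}=d\gamma_{i}$ is known to converge and $L^{n-1}$ annihilates primitive $2$-forms (so convergence of the primitive component of $d\theta_{i}$ is not justified) --- but this is a defect of the paper's argument, not one introduced by your write-up.
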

A closed symplectic manifold $(M,\w)$ is said to satisfy the $dd^{\La}$-Lemma if every $d$-exact, $d^{\La}$-closed form is $dd^{\La}$-exact. In fact, it turns out that the following conditions are equivalent on a closed symplectic manifold $(M,\w)$ \cite{AT,Guillemin,Mathieu,Yan}:\\
(1) $(M,\w)$ satisfies the $dd^{\La}$-Lemma;\\
(2) every de Rham cohomology class admits a representative being both $d$-closed and $d^{\La}$-closed;\\
(3) the hard Lefschetz condition holds on $(M,\w)$.

In \cite{TWZ}, they defined the $L^{2}$-$dd^{\La}$ Lemma on a complete non-compact almost K\"{a}hler manifold.
\begin{definition}(\cite[Definition 3.3]{TWZ})
Let $(M, g, J,\w)$ be a complete non-compact almost K\"{a}hler manifold with bounded geometry. Let $\a\in\Om^{k}_{(2)}$ be a $d$- and $d^{\La}$-closed differential form. We say that the $L^{2}$-$dd^{\La}$ Lemma holds if the following properties are equivalent:\\
(i) $\a=d\b$, $\b\in L^{2}_{1}\Om^{k-1}$;\\
(ii) $\a=d^{\La}\gamma$, $\gamma\in L^{2}_{1}\Om^{k+1}$;\\
(iii) $\a=dd^{\La}\theta$, $\theta\in L^{2}_{2}\Om^{k}$.
\end{definition}
In \cite{TWZ}, they prove that the $L^{2}$-$dd^{\La}$ lemma holds on universal covering space of a closed symplectic manifold which satisfies the hard Lefschetz property.
\begin{proposition}\label{P3}(\cite[Proposition 3.4]{TWZ})
Let $(M,g,J,\w)$ be a $2n$-dimensional closed almost K\"{a}hler manifold. We denote by $\pi: (\tilde{M},\tilde{g},\tilde{J},\tilde{\w}) \rightarrow (M, g, J,\w)$ the universal covering map. If $(M, g, J,\w)$ satisfies the hard Lefschetz property, then $L^{2}$-$dd^{\La}$ Lemma holds on $(\tilde{M},\tilde{g},\tilde{J},\tilde{\w})$.
\end{proposition}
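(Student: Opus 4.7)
The plan is to prove the equivalence cyclically, identifying (iii) as the strongest condition. The implications (iii)$\Rightarrow$(i) and (iii)$\Rightarrow$(ii) are immediate by taking $\b:=d^{\La}\theta$ and $\gamma:=d\theta$, both of which lie in $L^{2}_{1}$ since $\theta\in L^{2}_{2}$. By the Hodge star $\ast$, which intertwines $d$ with $\pm d^{\La}$ up to a Lefschetz-degree factor, the implications (i)$\Rightarrow$(iii) and (ii)$\Rightarrow$(iii) are exchanged; so it is enough to prove (i)$\Rightarrow$(iii).

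For the main step, suppose $\a\in\Om^{k}_{(2)}(\tilde M)$ is $d$- and $d^{\La}$-closed and $\a=d\b$ with $\b\in L^{2}_{1}\Om^{k-1}(\tilde M)$. I would first apply the Lefschetz decomposition
\[
\a = \sum_{r\geq \max(k-n,0)} \frac{1}{r!}\,L^{r}B_{k-2r},\qquad B_{k-2r}\in P^{k-2r}(\tilde M)\cap L^{2},
\]
which extends to $L^{2}$ because $\tilde M$ has bounded geometry and $L,\La$ act as bounded operators on $L^{2}$. By Proposition \ref{P2}, the assumption $d\a=d^{\La}\a=0$ forces $dB_{k-2r}=0$ for every $r$; primitivity then yields $d^{\La}B_{k-2r}=-\La dB_{k-2r}=0$ automatically, so by Lemma \ref{L2} each primitive component is $L^{2}$ symplectic harmonic. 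The $L^{2}$-exactness of $\a$ transfers, via the orthogonality of the Lefschetz decomposition with respect to the bounded-geometry $L^{2}$-inner product, to a suitable $L^{2}$-triviality for the individual $B_{k-2r}$. The problem is thereby reduced to the following key claim: a primitive, $L^{2}$ symplectic harmonic, $L^{2}$-exact form $B$ on $\tilde M$ admits a potential $\theta\in L^{2}_{2}$ with $B=dd^{\La}\theta$.

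To prove the key claim I would use the hard Lefschetz hypothesis on the closed base $M$ through $\Ga=\pi_{1}(M)$-equivariant $L^{2}$-Hodge theory: bounded geometry gives a bounded Green operator $G$ for $\De=dd^{\ast}+d^{\ast}d$ on the orthogonal complement of $\H^{\ast}_{(2)}(\tilde M)$, and classical hard Lefschetz on $M$ lifts via Atiyah's $\Ga$-dimension machinery to the statement that $L^{n-k}:\H^{k}_{(2)}(\tilde M)\to\H^{2n-k}_{(2)}(\tilde M)$ is an isomorphism of $\Ga$-Hilbert modules. Writing $B=\De GB$ and using the symplectic identities $[d^{\La},L]=d$ and $[d,\La]=d^{\La}$ inside the primitive sector, one unwinds the right-hand side to an expression $dd^{\La}\theta$; elliptic regularity for the second-order symplectic operator $dd^{\La}+d^{\La}d$ on a complete bounded-geometry manifold places $\theta$ in $L^{2}_{2}$.

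The main obstacle is the transfer of hard Lefschetz from $M$ to the $L^{2}$-setting on $\tilde M$: one must verify carefully that $L^{n-k}$ induces an isomorphism on $\Ga$-equivariant $L^{2}$-cohomology (not merely on the classical cohomology of $M$), and that the potential extracted from the Green operator has the required Sobolev regularity. Atiyah's $\Ga$-equivariant Hodge decomposition together with bounded-geometry Sobolev estimates should supply both ingredients, but the bookkeeping between the primitive pieces and the Lefschetz weights is delicate and constitutes the technical heart of the argument. A caveat: this proof must be arranged so as not to invoke Theorem \ref{T6}, whose derivation in the introduction passes through Proposition \ref{P3} itself; Theorem \ref{T4}, however, is independent and may be used freely if a shortcut via $L^{2}$ symplectic harmonic representatives on $\tilde M$ is desired.
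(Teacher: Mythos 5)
You cannot be checked against an in-paper argument here, because the paper offers none: Proposition \ref{P3} is imported verbatim from \cite[Proposition 3.4]{TWZ} and used as a black box (it feeds Proposition \ref{P4}, which feeds Theorem \ref{T6}). Judged on its own terms, your outline has a fatal gap at exactly the point you flag as the ``technical heart''. Your key claim is discharged by asserting that the hard Lefschetz property on the closed base lifts, ``via Atiyah's $\Ga$-dimension machinery'', to an isomorphism $L^{n-k}:\mathcal{H}^{k}_{(2)}(\tilde{M})\rightarrow\mathcal{H}^{2n-k}_{(2)}(\tilde{M})$. Atiyah's $L^{2}$-index theorem equates $\Ga$-indices of lifted \emph{elliptic} operators; $L^{n-k}$ is a zeroth-order cup-product map, no index theorem applies to it, and equality of $\Ga$-dimensions (which here follows from Poincar\'e duality anyway) does not make a given $\Ga$-equivariant map surjective. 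Worse, the statement you invoke is precisely the $L^{2}$-hard Lefschetz property of $\tilde{M}$ at the harmonic level, i.e.\ the content of Lemma \ref{L3} together with Proposition \ref{P4} and Theorem \ref{T6} --- and in this paper's architecture that is a \emph{consequence} of Proposition \ref{P3}, not an available input. You correctly notice the circularity through Theorem \ref{T6}, and then reintroduce the same assertion one line later under a different name; lifting hard Lefschetz to the cover is the whole difficulty, and there is no independent shortcut to it in this paper.

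Two further steps fail concretely. First, ``the $L^{2}$-exactness of $\a$ transfers, via orthogonality of the Lefschetz decomposition, to the individual $B_{k-2r}$'' is a non sequitur: pointwise orthogonality of the decomposition says nothing about exactness of components, and $d$ does not commute with the projectors $\Phi_{(k,k-2r)}(L,\La)$ --- the commutation trick of Proposition \ref{P2} works for the pair of conditions $d\a=d^{\La}\a=0$, not for exactness with $L^{2}_{1}$ control of potentials. Second, your regularity step invokes ``elliptic regularity for the second-order symplectic operator $dd^{\La}+d^{\La}d$'', but $dd^{\La}+d^{\La}d=0$ identically (since $d^{\La}=d\La-\La d$ and $d^{2}=0$), so this operator is not elliptic and yields nothing; the relevant elliptic operators are Tseng--Yau's fourth-order Laplacians attached to $dd^{\La}$ and $d+d^{\La}$, and it is $L^{2}$-Hodge theory for these lifted operators, combined with the equivalence on the closed base between hard Lefschetz and the $dd^{\La}$-lemma, that underlies the cited proof in \cite{TWZ}. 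A related analytic over-claim: on a complete non-compact manifold the range of the de Rham Laplacian need not be closed, so writing $B=\De GB$ with a bounded Green operator on the complement of $\mathcal{H}^{\ast}_{(2)}$ requires a spectral gap that bounded geometry alone does not provide --- this is precisely why the $L^{2}$-$dd^{\La}$ lemma is formulated with Sobolev-space potentials rather than derived from naive Hodge theory on $\tilde{M}$. (Minor: in (iii)$\Rightarrow$(ii) the potential is $\gamma=-d\theta$, since $dd^{\La}=-d^{\La}d$.)
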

Using the useful Proposition \ref{P3}, we could prove that the any class of $L^{2}$-harmonic forms on the universal covering space $(\tilde{M},\tilde{g},\tilde{J},\tilde{\w})$ contains a $L^{2}$ symplectic harmonic form, see Proposition \ref{P4}. Before our proof, we construct some estimates on $L^{2}$-harmonic forms.
\begin{lemma}
If $\a$ is a $L^{2}$-harmonic $p$-form on a complete Riemannian manifold $(M,g)$ with bounded geometry, then $\na^{k}\a\in L^{2}$ for all $k\in\N$. 
\end{lemma}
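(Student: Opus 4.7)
The plan is to combine interior elliptic regularity for the Hodge Laplacian with the uniform local estimates afforded by bounded geometry, and then to sum over a locally finite cover of $M$. Since $\a$ is $L^{2}$-harmonic, $(d+d^{\ast})\a=0$, hence $\Delta\a=(dd^{\ast}+d^{\ast}d)\a=0$; in particular, ellipticity of $d+d^{\ast}$ already guarantees that $\a$ is smooth, so the statement $\na^{k}\a\in L^{2}$ is a question about global integrability rather than regularity.

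First I would use bounded geometry to fix a positive number $r_{0}$, smaller than the uniform injectivity radius, such that for every $p\in M$ the metric components $g_{ij}$ in geodesic normal coordinates on $B(p,r_{0})$, together with all their derivatives, are bounded in terms of constants depending only on the order of derivation and on the global geometric data of $(M,g)$, but not on $p$. Because the coefficients of $\Delta$ are universal polynomials in $g_{ij}$, $g^{ij}$ and their derivatives, they are uniformly bounded in every $C^{m}$-norm. Standard interior $L^{2}$ elliptic regularity for the elliptic operator $\Delta$ then yields, for each $k\in\N$,
\begin{equation*}
\|\a\|_{W^{k+2,2}(B(p,r_{0}/2))}\leq C_{k}\bigl(\|\Delta\a\|_{W^{k,2}(B(p,r_{0}))}+\|\a\|_{L^{2}(B(p,r_{0}))}\bigr)=C_{k}\|\a\|_{L^{2}(B(p,r_{0}))},
\end{equation*}
with $C_{k}$ independent of $p$.

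Next I would invoke the standard consequence of bounded geometry that $M$ admits a countable cover $\{B(p_{i},r_{0}/2)\}$ whose doubled family $\{B(p_{i},r_{0})\}$ has multiplicity at most some universal constant $N$, and that on balls of radius at most $r_{0}/2$ the intrinsic norm $\|\na^{k}\a\|_{L^{2}}$ is equivalent, with uniform constants, to the coordinate Sobolev norm $\|\a\|_{W^{k,2}}$. Summing the local estimate then gives
\begin{equation*}
\|\na^{k}\a\|_{L^{2}(M)}^{2}\leq\sum_{i}\|\na^{k}\a\|_{L^{2}(B(p_{i},r_{0}/2))}^{2}\leq C_{k}'\sum_{i}\|\a\|_{L^{2}(B(p_{i},r_{0}))}^{2}\leq C_{k}'N\|\a\|_{L^{2}(M)}^{2}<\infty.
\end{equation*}

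The main obstacle will be making precise the uniformity of the constant $C_{k}$ in $p$ and the comparability between the intrinsic covariant-derivative norms and the coordinate Sobolev norms; both rely essentially on the bounded-geometry hypothesis (positive injectivity radius and uniform bounds on all covariant derivatives of the curvature), without which the Christoffel symbols and the constants in the interior regularity estimate could degenerate at infinity. Once these uniformity statements are in hand, the summation argument is routine.
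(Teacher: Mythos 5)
Your proposal is correct, but it takes a genuinely different route from the paper. The paper argues intrinsically via the Weitzenb\"{o}ck formula: from $\na^{\ast}\na\a+\{Riem,\a\}=0$ it pairs with $f_{i}^{2}\a$ for cutoff functions $f_{i}$ with uniformly bounded gradient and integrates by parts to get $\na\a\in L^{2}$, then inducts on $k$ by differentiating the identity $k$ times, using that the commutators $[\na,\na^{\ast}]$ contribute only terms in $\na^{j}\a$, $j\leq k$, multiplied by $\na^{l}Riem$, so that $\|\na^{k+1}\a\|^{2}\leq\sum_{j\leq k}C_{j}\|\na^{j}\a\|^{2}$ after another cutoff argument. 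You instead localize: uniform normal-coordinate charts, uniform interior elliptic estimates for $\Delta$, a cover of bounded multiplicity, and summation. Both arguments are sound, but the ingredients you invoke are heavier and you rightly identify them as the crux: uniform $C^{m}$ control of $g_{ij}$ in normal coordinates from bounds on $\na^{l}Riem$ (a theorem of Eichhorn), uniformly locally finite covers, and the uniform equivalence of coordinate Sobolev norms with intrinsic norms $\|\na^{k}\cdot\|_{L^{2}}$ are each standard but nontrivial facts from the bounded-geometry literature. What your approach buys: it needs no Bochner-type identity, so it applies verbatim to any uniformly elliptic operator with uniformly bounded coefficients, and it gives full local $W^{k+2,2}$ control, not just integrability of covariant derivatives. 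What the paper's approach buys: it is essentially self-contained (Weitzenb\"{o}ck plus integration by parts), avoids all chart and norm-comparison machinery, and notably never uses the lower bound on the injectivity radius --- only bounds on $Riem$ and its covariant derivatives together with cutoffs of bounded gradient, which exist on any complete manifold --- whereas your argument genuinely requires the injectivity radius bound to manufacture the uniform charts.
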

\begin{proof}
Following the Weitzenb\"{o}ck formula for the harmonic $p$-form $\a$, we have
\begin{equation}\label{E6}
\na^{\ast}\na\a+ \{Riem,\a\}=0,
\end{equation}
Here $Riem$ denotes the Riemann curvature tensor of $(M,g)$. Assume now that the sequence of cutoff functions, $\{f_{i}\}_{i=1}^{\infty}$, is $C^{1}$ bounded for all $i$, i.e., there exists a positive constant $C$ such that $|\na f_{i}|\leq C$. Then have
\begin{equation}
\begin{split}
0&=(\na^{\ast}\na\a+ \{Riem,\a\},f_{i}^{2}\a)\\
&=\|\na (f_{i}\a)|^{2}-\|(\na f_{i})\a\|^{2}+(\{Riem,\a\},f_{i}^{2}\a).\\
\end{split}
\end{equation}
Since $Riem$ is bounded, taking $i\rightarrow\infty$, we obtain $\na\a\in L^{2}$. Differentiating equation (\ref{E6}), we obtain
\begin{equation}\label{E7}
0=\na^{\ast}\na\na\a-[\na,\na^{\ast}]\na\a+\{\na Riem,\a\}+\{Riem, \na\a\} \end{equation}
The quantity $[\na,\na^{\ast}]\na\a$ is a sum of terms involving at most one derivative of $\na\a$ and  Riemannian curvature tensor. Hence taking the $L^{2}$-inner product of (\ref{E7}) with $\na\a$ and integrating by parts, we obtain
$$\|\na^{2}\a\|^{2}\leq c_{1}\|\na\a\|^{2}+c_{2}\|\a\|^{2}.$$
Here $c_{1},c_{2}$ are two bounded constants because $\na Riem$ and $Riem$ are bounded. We now induct. Suppose that $\na^{b}\a\in L^{2}$, for $b\leq k$. Differentiating (\ref{E6}) $k$ times, gives 
$$\na^{k}(\na^{\ast}\na\a +\{Riem,\a\})=0.$$
Nothing that
\begin{equation*}
\begin{split}
\na^{k}\na^{\ast}(\na\a)&=\na^{k-1}[\na,\na^{\ast}]\na\a+\na^{k-1}\na^{\ast}\na^{2}\a\\
&=\na^{k-1}[\na,\na^{\ast}]\na\a+\na^{k-2}[\na,\na^{\ast}]\na^{2}\a+\na^{k-2}\na^{\ast}\na^{3}\a\\
&=\cdots\\
&=\sum_{i=1}^{k}\na^{k-i}[\na,\na^{\ast}]\na^{i}\a+\na^{\ast}\na^{k+1}\a.\\
\end{split}
\end{equation*}
The quantity $\na^{k-i}[\na,\na^{\ast}]\na\a=\na^{k-i}(Riem\otimes\na\a)$ is a sum of terms involving at most $\na^{j}\a$, $j=1,\cdots,k-i+1$ and $\na^{l}Riem$, $l=0,\cdots,k-i$.
Then we have
\begin{equation*}
\begin{split}
0&=(\na^{k}(\na^{\ast}\na\a+\{Riem,\a\}),f^{2}_{i}\na^{k}\a)\\
&\geq(\na^{\ast}\na^{k+1}\a, f_{i}^{2}\na^{k}\a)-\sum_{0\leq j\leq k}C_{j}\|\na^{j}\a\|^{2},\\
&\geq\|\na (f_{i}\na^{k}\a)\|^{2}-\|df_{i}\otimes\na^{k}\a\|^{2}-\sum_{0\leq j\leq k}C_{j}\|\na^{j}\a\|^{2},\\
\end{split}
\end{equation*}
for some constants $C_{j}$ determined by the sup norms of $|\na^{b}Riem|$, $b\leq k$. Taking the limit as $i\rightarrow\infty$ gives $\na^{k+1}\a\in L^{2}$.
\end{proof}
\begin{proposition}\label{P4}
Let $(M,g,J,\w)$ be a $2n$-dimensional closed almost K\"{a}hler manifold. We denote by $\pi: (\tilde{M},\tilde{g},\tilde{J},\tilde{\w}) \rightarrow (M, g, J,\w)$ the universal covering map. If $(M, g, J,\w)$ satisfies the hard Lefschetz property, then for any $k\leq n$, $H^{k}_{(2),hr}(\tilde{M})\rightarrow H^{k}_{(2)}(\tilde{M})$ is surjective.
\end{proposition}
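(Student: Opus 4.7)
The plan is to realize every $L^{2}$-harmonic class on $\tilde{M}$ by an $L^{2}$ symplectic harmonic representative, produced by correcting a given $L^{2}$-harmonic form by a $d$-exact term. The whole argument should be driven by the $L^{2}$-$dd^{\La}$ Lemma on $\tilde{M}$ supplied by Proposition \ref{P3}; once that is in hand, the rest is a single application of the lemma combined with the anticommutation identity $dd^{\La} + d^{\La} d = 0$ (which follows immediately from $d^{\La} = d\La - \La d$ and $d^{2}=0$).

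Starting from an arbitrary $\a \in \mathcal{H}^{k}_{(2)}(\tilde{M})$, I would first check that $d^{\La}\a$ is an admissible input for Proposition \ref{P3}. Since $d\a = 0$ we have $d^{\La}\a = d(\La\a)$, so $d^{\La}\a$ is $d$-closed, $d^{\La}$-closed (from $(d^{\La})^{2} = 0$), and $d$-exact with primitive $\La\a$. The real content is that Proposition \ref{P3} requires $\La\a \in L^{2}_{1}\Om^{k-2}$, not merely $L^{2}$. Here I would invoke the preceding lemma, which gives $\na^{j}\a \in L^{2}$ for every $j \in \N$; since $\w$ descends from the compact base $M$, the operator $\La$ has uniformly bounded covariant derivatives on $\tilde{M}$ and therefore preserves the $L^{2}$-Sobolev scale. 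This yields $\La\a \in L^{2}_{1}$, and Proposition \ref{P3} then furnishes $\theta \in L^{2}_{2}\Om^{k-1}$ with $d^{\La}\a = dd^{\La}\theta$.

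The next step is to set $\b := \a + d\theta$. Then $d\b = 0$ is immediate, and using $d^{\La}d = -dd^{\La}$ one computes
$$d^{\La}\b = d^{\La}\a + d^{\La}d\theta = dd^{\La}\theta - dd^{\La}\theta = 0,$$
so $\b \in H^{k}_{(2),hr}(\tilde{M})$. Because $\theta$ and $d\theta$ both lie in $L^{2}$, we have $\theta \in \mathcal{D}^{k-1}(d)$, hence $\b - \a = d\theta \in d\mathcal{D}^{k-1}(d) \subset \overline{d\mathcal{D}^{k-1}(d)}$, so $\b$ and $\a$ represent the same class in the reduced $L^{2}$-cohomology; the map $H^{k}_{(2),hr}(\tilde{M}) \to H^{k}_{(2)}(\tilde{M})$ therefore sends $\b \mapsto [\a]$, and surjectivity follows from the arbitrariness of $\a$.

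The main obstacle is the Sobolev-regularity bookkeeping that makes Proposition \ref{P3} applicable: without the iterated covariant-derivative estimates of the preceding lemma one cannot conclude $\La\a \in L^{2}_{1}$, and the $L^{2}$-$dd^{\La}$ Lemma could not be invoked. Everything after that step is purely algebraic and uniform in $k$, so no additional difficulty arises in the range $k \leq n$ asserted in the statement.
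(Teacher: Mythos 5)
Your proposal is correct and follows essentially the same route as the paper: apply the $L^{2}$-$dd^{\La}$ Lemma (Proposition \ref{P3}) to $d^{\La}\a$, obtain $\theta\in L^{2}_{2}$ with $d^{\La}\a=dd^{\La}\theta$, and correct $\a$ by the exact term $d\theta$ using $dd^{\La}+d^{\La}d=0$. The only cosmetic difference is that you enter the lemma through clause (i), checking $d^{\La}\a=d(\La\a)$ with $\La\a\in L^{2}_{1}$, while the paper in effect uses clause (ii) with $\a\in L^{2}_{1}$ itself as the $d^{\La}$-potential; your Sobolev bookkeeping via the iterated-derivative lemma is the same ingredient the paper invokes, and your explicit remark that $d\theta\in d\mathcal{D}^{k-1}(d)$ preserves the reduced class makes the final step slightly more careful than the paper's.
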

\begin{proof}
Let $\a$ be a $L^{2}$-harmonic $k$-form. Notice that $(\tilde{M},\tilde{g},\tilde{J},\tilde{\w})$ is a complete, non-compact almost K\"{a}hler manifold with bounded geometry. Then we can obtain that $\a$ is smooth and $\|\a\|_{L^{2}_{k}(\tilde{M})}<\infty$ for all $k\in\N$.\\
(1) If $d^{\La}\a=0$, then $\a$ is a $L^{2}$ symplectic harmonic form.\\
(2)  If $d^{\La}\a\neq 0$, then $d^{\La}\a\in L^{2}$ since $|d^{\La}\a|\leq c|\na\a|$ and $\na\a\in L^{2}$. We observe that $d(d^{\La}\a)=0$ and $d^{\La}(d^{\La}\a)=0$, following Proposition \ref{P3}, we can find $\gamma\in L^{2}_{2}(\tilde{M})$ such that $d^{\La}\a=dd^{\La}\gamma=0$. Hence  $d^{\La}(\a+d\gamma)=0$ and $d(\a+d\gamma)=0$. It follows that $\a+d\gamma$ is a $L^{2}$ symplectic harmonic form. This proves that the map is  surjective.
\end{proof}
\begin{proof}[\textbf{Proof of Theorem \ref{T6}}.]
The conclusions follow from Theorem \ref{T4} and Proposition \ref{P4}. 
\end{proof}

\section{Symplectic parabolic manifolds}
\subsection{$L^{2}$-Hodge number}
We assume throughout this subsection that $(M,g,J)$ is a compact complex $n$-dimensional manifold with a Hermitian metric $g$, and $\pi:(\tilde{M},\tilde{g},\tilde{J})\rightarrow(M,g,J)$ its universal covering with $\Gamma$ as an isometric group of deck transformations. Let $\mathcal{H}^{k}_{(2)}(\tilde{M})$ be the spaces of $L^{2}$-harmonic $k$-forms on $\Om^{k}_{(2)}(\tilde{M})$, the squared integrable $k$-forms on $(\tilde{M},\tilde{g})$, and denote by $\dim_{\Gamma}\mathcal{H}^{k}_{(2)}(\tilde{M})$ the Von Neumann dimension of $\mathcal{H}^{k}_{(2)}(\tilde{M})$ with respect to $\Gamma$ \cite{Atiyah,Pansu}. Its precise definition is not important in our article but only the
following two basic facts are needed, see \cite{Gromov,Pansu}.\\
(1)  $\dim_{\Ga}\mathcal{H}_{(2)}^{k}(M)=0 \Leftrightarrow \mathcal{H}_{(2)}^{k}(M)=\{0\},$\\
(2) $\dim_{\Ga}\mathcal{H}$ is additive. Given 
$$0\rightarrow\mathcal{H}_{1}\rightarrow\mathcal{H}_{2}\rightarrow 
\mathcal{H}_{3}\rightarrow 0,$$ 
one have 
$$\dim_{\Ga}\mathcal{H}_{2}=\dim_{\Ga}\mathcal{H}_{1}+\dim_{\Ga}\mathcal{H}_{3}.$$
We denote by $h_{(2)}^{k}(\tilde{M})$ the $L^{2}$-Hodge numbers of $M$, which are defined to be $$h_{(2)}^{k}(M):=\dim_{\Gamma}\mathcal{H}_{(2)}^{k}(\tilde{M}),\ (0\leq k\leq n).$$ 
It turns out that $h^{k}_{(2)}(M)$ are independent of the Hermitian metric $g$ and depend only on $(M,J)$. By the $L^{2}$-index theorem of Atiyah \cite{Atiyah}, we have the following crucial identities between $\chi(M)$ and the $L^{2}$-Hodge numbers $h_{(2)}^{k}(M)$:
$$\chi(M)=\sum_{k=0}^{n}(-1)^{k}h_{(2)}^{k}(M).$$
\begin{remark}
	$L^{2}$-Betti number are not homotopy invariants for complete non-compact manifolds. But Dodziuk \cite{Dodziuk1977} proves that the class of the representation of $\Gamma$ on the space of $L^{2}$-harmonic forms is a homotopy invariant of $M$. In particular the $\Gamma$-dimension (in the sense of Von Neumann) of the space of $L^{2}$-harmonic forms does not depend on the chosen $\Gamma$-invariant metric.
\end{remark}
\subsection{$d$(sublinear) form}
Let $(M,g)$ be a Riemannian manifold. A differential form $\a$ is called $d$(bounded) if there exists a form $\b$ on $M$ such that $\a=d\b$ and 
$$\|\b\|_{L^{\infty}(M,g)}=\sup_{x\in M}|\b(x)|_{g}<\infty.$$
It is obvious that if $M$ is compact, then every exact form is $d$(bounded). However, when $M$ is not compact, there exist smooth differential forms which are exact but not $d$(bounded). For instance, on $\mathbb{R}^{n}$, $\a=dx^{1}\wedge\cdots\wedge dx^{n}$ is exact, but it is not $d$(bounded). Let’s recall some concepts introduced in \cite{CX,JZ}.
\begin{definition}
	A differential form $\a$ on a complete non-compact Riemannian manifold $(M,g)$ is called $d$(sublinear) if there exist a differential form $\b$ and a number $c>0$ such that 
	$d\b=\a$
	and
	$$\ |\b(x)|_{g}\leq c(1+\rho_{g}(x,x_{0})),$$ 
	where $\rho_{g}(x,x_{0})$ stands for the Riemannian distance between $x$ and a base point $x_{0}$ with respect to $g$.
\end{definition}
We recall the following classical fact pointed proved in  \cite{CX} and for the reader’s convenience, we include a simple proof here (see also \cite{CY} Lemma 3.2):
\begin{theorem}\label{T3}
	Let $M$ be a complete simply-connected manifold of non-positive sectional curvature and $\a$ a bounded closed $k$-form on $M$, $k\geq 1$. Then $\a$ is $d$(sublinear).
\end{theorem}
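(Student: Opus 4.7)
The plan is to construct an explicit antiderivative $\beta$ by contracting $M$ radially onto the base point along the geodesic flow, and to control it pointwise via Jacobi-field estimates coming from nonpositive curvature. Since $(M,g)$ is complete, simply connected and has $K\le 0$, the Cartan--Hadamard theorem tells us that $\exp_{x_{0}}\colon T_{x_{0}}M\to M$ is a diffeomorphism. Define the smooth homotopy
\[
H\colon[0,1]\times M\to M,\qquad H(t,x)=\exp_{x_{0}}\!\bigl(t\,\exp_{x_{0}}^{-1}(x)\bigr),
\]
which interpolates between the constant map $H(0,\cdot)\equiv x_{0}$ and $H(1,\cdot)=\mathrm{Id}_{M}$, and set
\[
\beta(x):=\int_{0}^{1}\bigl(\iota_{\partial_{t}}H^{\ast}\alpha\bigr)\big|_{(t,x)}\,dt.
\]
Since a $k$-form pulled back through a constant map vanishes when $k\ge 1$, the standard homotopy formula applied to $H^{\ast}\alpha$, together with $d\alpha=0$, yields $\alpha=d\beta$, so $\beta$ is a smooth primitive of $\alpha$.

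The main work is the pointwise bound $|\beta(x)|_{g}\lesssim \rho_{g}(x,x_{0})$. Fix $x\in M$ and let $\gamma(t)=H(t,x)$; this is precisely the minimizing geodesic from $x_{0}$ to $x$ reparameterized on $[0,1]$, so $|dH_{(t,x)}(\partial_{t})|_{g}=|\dot\gamma(t)|_{g}=\rho_{g}(x,x_{0})$. For $v\in T_{x}M$, the family of curves $t\mapsto H(t,c(s))$, with $c(0)=x$ and $\dot c(0)=v$, is a geodesic variation of $\gamma$, so $dH_{(t,x)}(v)=J(t)$ where $J$ is the Jacobi field along $\gamma$ with $J(0)=0$ and $J(1)=v$. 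The technical heart of the proof, and the only place nonpositive curvature enters, is the sharp bound on $|J(t)|_{g}$. Because $K\le 0$ forbids conjugate points, $J$ is nonzero on $(0,1]$, and a direct computation using the Jacobi equation $J''=-R(J,\dot\gamma)\dot\gamma$ gives
\[
|J|''(t)=\frac{|J'|^{2}|J|^{2}-\langle J',J\rangle^{2}}{|J|^{3}}-\frac{\langle R(J,\dot\gamma)\dot\gamma,J\rangle}{|J|}\ge 0,
\]
where the first term is nonnegative by Cauchy--Schwarz and the second because $\langle R(J,\dot\gamma)\dot\gamma,J\rangle\le 0$. Hence $|J|$ is convex on $[0,1]$ with $|J(0)|=0$, which forces $|J(t)|\le t|J(1)|=t|v|_{g}$ for all $t\in[0,1]$.

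Combining the two estimates, for any $v_{1},\dots,v_{k-1}\in T_{x}M$,
\[
|\beta(x)(v_{1},\dots,v_{k-1})|\le\|\alpha\|_{L^{\infty}}\,\rho_{g}(x,x_{0})\,\Bigl(\prod_{i}|v_{i}|_{g}\Bigr)\int_{0}^{1}t^{k-1}\,dt=\frac{\|\alpha\|_{L^{\infty}}}{k}\,\rho_{g}(x,x_{0})\,\prod_{i}|v_{i}|_{g}.
\]
Therefore $|\beta(x)|_{g}\le (\|\alpha\|_{L^{\infty}}/k)\,\rho_{g}(x,x_{0})$, and taking $c=\max\{1,\|\alpha\|_{L^{\infty}}/k\}$ gives the $d$(sublinear) bound $|\beta(x)|_{g}\le c\bigl(1+\rho_{g}(x,x_{0})\bigr)$. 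The only real obstacle in this outline is the Jacobi-field convexity estimate; once $|J(t)|\le t|v|_{g}$ is in hand, the remaining steps are formal manipulations with the Cartan homotopy operator.
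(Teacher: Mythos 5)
Your proof is correct and follows essentially the same route as the paper: contract $M$ radially onto $x_{0}$ via $\phi_{t}(x)=\exp_{x_{0}}(t\exp_{x_{0}}^{-1}(x))$ (Cartan--Hadamard), produce the primitive $\beta=\int_{0}^{1}\phi_{t}^{\ast}(\iota_{X_{t}}\alpha)\,dt$ by the homotopy formula, and conclude linear growth from $|X_{t}|=\rho(x)$ together with the estimate $|d\phi_{t}(v)|\leq ct|v|$ in nonpositive curvature. The only difference is that where the paper simply invokes the standard comparison theorem (citing \cite{BGS,CX}) for this last bound, you derive it directly from the convexity of $|J|$ for Jacobi fields with $J(0)=0$, which makes your write-up self-contained but is not a genuinely different argument.
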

\begin{proof}
	Fix $x_{0}\in M$ and denote by $\exp_{x_{0}}:T_{x_{0}}M\rightarrow M$ the 
	exponential map. Let $\phi_{t}:M\rightarrow M$, $t\in[0,1]$, be a family of 
	maps defined by $\phi_{t}(x)=\exp_{x_{0}}(t\circ\exp^{-1}_{x_{0}}(x))$, 
	$x\in M$. We denote the distance function from $x_{0}$ by $\rho$, then
	$$X_{t}|_{\phi_{t}(x)}=(\frac{d}{dt}\phi_{t})|_{\phi_{t}(x)}=\rho(x)\na\rho|_{\phi_{t}(x)}.$$
	It is clear that $\phi_{1}=id$ and $\phi_{0}\equiv x_{0}$. Then
	$$\a=\int_{0}^{1}(\frac{d}{dt}\phi^{\ast}_{t}\a)(x)dt=\int_{0}^{1}\phi^{\ast}_{t}(\mathcal{L}_{X_{t}}\a)(x)dt=d(\int_{0}^{1}\phi^{\ast}_{t}(\mathit{i}_{X_{t}}\a)),$$
	where we have use the Cartan's formula $\mathcal{L}=[d,\mathit{i}]$. If we 
	set 
	$$\b=\int_{0}^{1}\phi_{t}^{\ast}(\mathit{i}_{X_{t}}\a)dt,$$
	then $\a=d\b$. We show $\b$ has $d$(sublinear) $L^{\infty}$-norm. Fix $x\in M$, $v_{1},\cdots, v_{k-1}\in T_{x}M$, $|v_{i}|=1$, $\langle v_{i},\na\rho\rangle=0$, we have
	$$|\b(v_{1},\cdots,v_{p-1})|(x)=\int_{0}^{1}\a\big{(}X_{t},(d\phi_{t})(v_{1}),\cdots,(d\phi_{t})(v_{k-1})\big{)}(\phi_{t}(x))dt.$$
	By the standard comparison theorem , we have 
	$$|d\phi_{t}(v)|\leq ct$$
	for $v\in T_{x}M$, $|v|=1$ and $\langle v,\na\rho\rangle=0$, $c$ is a uniform positive constant. One also can see \cite{BGS,CX}. Hence
	$$|\b(v_{1},\cdots,v_{p-1})|(x)\leq c\|\a\|_{L^{\infty}(M,g)}|X_{t}|\int_{0}^{1}t^{k-1}dt\leq c\|\a\|_{L^{\infty}(M,g)}\rho(x) .$$
\end{proof}
If $(M,g)$ is a closed Riemannian manifold of non-positive curvature, then the sectional curvature on the universal covering space $(\tilde{M},\tilde{g})$ of $(M,g)$ is also non-positive. We have
\begin{proposition}\label{P1}
If $(M,g)$ is a closed smooth Riemannian manifold of non-positive sectional curvature and $\pi:(\tilde{M},\tilde{g})\rightarrow (M,g)$ its universal covering. If $\a$ is a closed $k$-form on $M$, then the lifted $k$-form $\tilde{\a}:=\pi^{\ast}\a$ is $d$(sublinear).
\end{proposition}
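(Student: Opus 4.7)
The plan is to reduce the statement to Theorem \ref{T3} by verifying that the pulled-back form $\tilde{\a}$ on $(\tilde{M}, \tilde{g})$ satisfies the hypotheses of that theorem. Since Theorem \ref{T3} already contains the substantive geometric content (the construction of a primitive via the exponential-map homotopy together with the Jacobi-field comparison estimate for the differential of $\phi_{t}$), essentially all that remains is to check that the relevant hypotheses transfer from the compact base $(M,g)$ to its universal cover $(\tilde{M}, \tilde{g})$.

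First I would verify that $(\tilde{M}, \tilde{g})$ is a complete, simply-connected Riemannian manifold of non-positive sectional curvature. Simple-connectedness is automatic from the universal covering construction. The metric $\tilde{g} := \pi^{\ast} g$ makes $\pi$ a local isometry, and since sectional curvature is a pointwise invariant, the non-positivity of the curvature lifts to $(\tilde{M}, \tilde{g})$. Completeness of $\tilde{g}$ follows from completeness of $g$ (the compact base is automatically complete) together with the fact that a Riemannian covering of a complete manifold is complete; by Cartan-Hadamard, $\tilde{M}$ is diffeomorphic to $\mathbb{R}^{\dim M}$, hence non-compact, so the definition of $d$(sublinear) applies non-trivially.

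Next, I would check that $\tilde{\a} = \pi^{\ast}\a$ is a closed, bounded $k$-form on $\tilde{M}$. Closedness is immediate from $d \pi^{\ast} = \pi^{\ast} d$. For boundedness, because $\pi$ is a local isometry we have $|\tilde{\a}|_{\tilde{g}}(x) = |\a|_{g}(\pi(x))$ for every $x \in \tilde{M}$, and $|\a|_{g}$ is a continuous function on the compact manifold $M$, hence uniformly bounded. Assuming $k \geq 1$ (for $k=0$ a closed form on a connected $M$ is constant, which is not exact unless it vanishes, so the statement is clearly intended for positive degree), Theorem \ref{T3} applied to $\tilde{\a}$ produces a primitive $\tilde{\b}$ with $|\tilde{\b}|_{\tilde{g}}(x) \leq c(1 + \rho_{\tilde{g}}(x, \tilde{x}_{0}))$, which is precisely the $d$(sublinear) condition.

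I do not anticipate any genuine obstacle in carrying this out: the proposition is essentially a corollary that packages the compact-to-covering transition so that Theorem \ref{T3} becomes directly applicable. The only step worth double-checking is the uniform boundedness of $\tilde{\a}$, and that is exactly where the compactness of $M$ enters in an essential way.
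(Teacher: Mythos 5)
Your proposal is correct and follows essentially the same route as the paper, which states Proposition \ref{P1} as an immediate consequence of Theorem \ref{T3} after noting that non-positive curvature lifts to the universal cover (with completeness, simple-connectedness, closedness, and boundedness of $\pi^{\ast}\a$ via compactness of $M$ all implicit). Your write-up merely makes these hypothesis checks explicit, including the degree-$k\geq 1$ caveat, which is a reasonable point the paper leaves unstated.
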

Gromov gave the definition of K\"{a}hler hyperbolic in \cite{Gromov}. A closed complex manifold is called K\"{a}hler hyperbolic if it admits a K\"{a}hler metric whose K\"{a}hler form $\w$ is $d$(bounded). After Gromov’s work, Cao-Xavier and Jost-Zuo gave the definition of K\"{a}hler parabolic. A closed complex manifold is called K\"{a}hler parabolic if it admits a K\"{a}hler metric whose K\"{a}hler form $\w$ is $d$(bounded). The following result is the main theorem in \cite{CX,Gromov,JZ}.
\begin{theorem}(\cite{Gromov,CX})\label{T5}
	Let $M$ be a complete $2n$-dimensional K\"{a}hler manifold with a $d$(sublinear) K\"{a}hler form $\w$. Then $\mathcal{H}^{k}_{(2)}(M,g)\neq\{0\}$,when $k\neq n$, i.e., 
	$$h_{(2)}^{k}(M)=0,\ k\neq n.$$ 
	Furthermore, if the K\"{a}hler form $\w$ is $d$(bounded), then $\mathcal{H}^{k}_{(2)}(M,g)\neq\{0\}$,when  $k=n$, i.e,
	$$h_{(2)}^{k}(M)\geq 1,\ k=n.$$
\end{theorem}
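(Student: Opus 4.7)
The plan is to treat the two statements separately. The vanishing in Part~(1) follows Gromov's classical integration-by-parts argument, in which the sublinear growth of a primitive $\eta$ of $\omega$ is just enough to compensate the $1/R$ decay of the gradient of a cutoff. The non-vanishing in Part~(2), which requires the stronger $d$(bounded) hypothesis, invokes Atiyah's $L^{2}$-index theorem together with Part~(1).

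For Part~(1), by Hodge duality $\mathcal{H}_{(2)}^{k}(M,g)\simeq\mathcal{H}_{(2)}^{2n-k}(M,g)$ it suffices to treat $k\leq n-1$. On a K\"ahler manifold the Lefschetz operator $L=\omega\wedge\cdot$ commutes with the Hodge Laplacian $\Delta$, so each primitive component produced by the Lefschetz decomposition of a harmonic $L^{2}$ form is again harmonic and still $L^{2}$ (since $|\omega|_{g}$ is constant). By the K\"ahler identity, for a primitive harmonic form $\alpha\in\mathcal{H}_{(2)}^{k}(M,g)$ one has $\ast\alpha=\pm\frac{1}{(n-k)!}\,\alpha\wedge\omega^{n-k}$, and therefore
\begin{equation*}
|\alpha|^{2}\,d\mathrm{Vol}_{g}=\alpha\wedge\ast\alpha=\pm\tfrac{1}{(n-k)!}\,\alpha\wedge\alpha\wedge\omega^{n-k}.
\end{equation*}
Writing $\omega=d\eta$ with $|\eta(x)|_{g}\leq c\bigl(1+\rho_{g}(x,x_{0})\bigr)$ and using both $d\omega=0$ and $d\alpha=0$, the right-hand density is exact:
\begin{equation*}
\alpha\wedge\alpha\wedge\omega^{n-k}=\pm\, d\!\left(\alpha\wedge\alpha\wedge\eta\wedge\omega^{n-k-1}\right).
\end{equation*}

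Next pick cutoffs $\chi_{R}$ with $\chi_{R}\equiv 1$ on $B_{R}(x_{0})$, $\mathrm{supp}\,\chi_{R}\subset B_{2R}(x_{0})$, and $|d\chi_{R}|_{g}\leq C/R$. Stokes' theorem, valid by completeness and compact support, yields
\begin{equation*}
\int_{M}\chi_{R}\,|\alpha|^{2}\,d\mathrm{Vol}_{g}=\mp\tfrac{1}{(n-k)!}\int_{M} d\chi_{R}\wedge\alpha\wedge\alpha\wedge\eta\wedge\omega^{n-k-1}.
\end{equation*}
On the annulus $A_{R}=B_{2R}\setminus B_{R}$ the product $|d\chi_{R}|_{g}\,|\eta|_{g}\leq (C/R)\cdot c(1+2R)$ is bounded uniformly in $R$, so the right-hand side is dominated by $C'\int_{A_{R}}|\alpha|^{2}\,d\mathrm{Vol}_{g}\to 0$ as $R\to\infty$, because $\alpha\in L^{2}$. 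Dominated convergence on the left then forces $\|\alpha\|_{L^{2}}^{2}=0$, i.e.\ $\alpha=0$. The hardest point is precisely this balancing: sublinear growth of $\eta$ is the critical threshold, and the argument leaves no margin beyond the $L^{2}$ smallness of $|\alpha|^{2}$ on annuli, so any weakening of the hypothesis on $\eta$ would break the estimate.

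For Part~(2), under the $d$(bounded) hypothesis one wants $h_{(2)}^{n}(M)\geq 1$. In the geometric setting in which the theorem is used --- a universal cover of a closed K\"ahler manifold, so that the $L^{2}$-index theorem applies --- Atiyah's theorem identifies the $L^{2}$-Euler characteristic of $M$ with the ordinary Euler characteristic of the base. Part~(1) forces this to equal $(-1)^{n}h_{(2)}^{n}(M)$, while a Lefschetz-trace computation with a family of twisted Dolbeault operators (Gromov's original argument, using the $d$(boundedness) of $\omega$ to construct the twist and keep the trace computable in $L^{2}$) produces a non-zero limiting index. Hence $h_{(2)}^{n}(M)\geq 1$.
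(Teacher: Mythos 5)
Your Part~(1) follows Gromov's original direct argument, which is indeed the route of the sources the paper cites; note that the paper itself gives no proof of this theorem, its in-house machinery being the cutoff estimate of Theorem~\ref{T8} (whose proof it does include) combined, as in the proof of Theorem~\ref{T1}, with a hard Lefschetz isomorphism on $L^{2}$-cohomology. Your reduction to primitive harmonic components is legitimate ($L$ and $\Lambda$ commute with $\Delta$ on a K\"ahler manifold, and the Lefschetz projectors are pointwise bounded, so the components stay harmonic and $L^{2}$), and your cutoff balancing $(C/R)\cdot c(1+2R)=O(1)$ on the annulus is exactly the estimate in the paper's proof of Theorem~\ref{T8}. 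However, your key displayed identity is wrong as stated: for a primitive $k$-form the Weil identity reads $\ast\alpha=(-1)^{k(k+1)/2}\frac{1}{(n-k)!}L^{n-k}J\alpha$, where $J$ is the Weil operator acting by $i^{p-q}$ on $(p,q)$-components; it is not $\pm\frac{1}{(n-k)!}\,\alpha\wedge\omega^{n-k}$. With your version the density $\alpha\wedge\alpha\wedge\omega^{n-k}$ vanishes identically whenever $k$ is odd (since $\alpha\wedge\alpha=0$ for odd-degree forms), while $|\alpha|^{2}\,d\mathrm{Vol}_{g}$ does not, so the claimed equality cannot hold. The repair is routine and does not change your structure: $J$ commutes with $\Delta$ on a K\"ahler manifold, so $J\alpha$ is again an $L^{2}$-harmonic, hence closed, form, and one runs your Stokes argument on $\alpha\wedge J\alpha\wedge\omega^{n-k}=\pm\,d\bigl(\alpha\wedge J\alpha\wedge\eta\wedge\omega^{n-k-1}\bigr)$ (equivalently, work with the $(p,q)$-components of $\alpha$ against $\bar{\alpha}$). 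With that fix, Part~(1) is correct.

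Part~(2) contains a genuine gap: the sentence asserting that ``a Lefschetz-trace computation with a family of twisted Dolbeault operators produces a non-zero limiting index'' \emph{is} the nonvanishing theorem, and nothing in your text proves it. Atiyah's theorem together with your Part~(1) only yields $(-1)^{n}\chi(M)=h^{n}_{(2)}(M)\geq 0$; to get $h^{n}_{(2)}(M)\geq 1$ one must show $\chi(M)\neq 0$, and this is precisely where Gromov uses the $d$(bounded) hypothesis: twisting $\bar{\partial}$ by line bundles $E_{k}$ with curvature $k\omega$, the $d$(boundedness) of $\omega$ gives a uniform spectral gap for the twisted Laplacians in the relevant degrees, while the $\Gamma$-index equals $\chi(M,E_{k})$, a polynomial in $k$ with nonzero leading term $\frac{k^{n}}{n!}\int_{M}\omega^{n}$, forcing a nontrivial $L^{2}$ kernel in middle degree. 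None of this appears in your sketch, so Part~(2) stands as a pointer to Gromov's argument rather than a proof. You are right to flag that the nonvanishing statement really requires the cocompact covering setting for Atiyah's theorem to apply (the theorem as quoted for a bare complete manifold is stated more loosely than what the cited sources prove, and it even contains sign-of-statement typos), but having restricted to that setting you still owe the spectral-gap and index computation, or at least a precise citation to it, before the claim $h^{n}_{(2)}(M)\geq 1$ is established.
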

Hitchin \cite{Hitchin} has proven that the $L^{2}$ harmonic forms on a complete non-compact K\"{a}hler parabolic manifold lie in the middle dimension, that is, if the K\"{a}hler form $\w$ on a complete non-compact K\"{a}hler manifold is $d$(sublinear), then the only $L^{2}$ harmonic forms lie in the middle dimension. We first reproduce for the reader’s benefit the proof of the theorem of Jost-Zuo.
\begin{theorem}(\cite{JZ} and \cite[Theorem 1]{Hitchin})\label{T8}
Let $M$ be a complete oriented Riemannian manifold and let $\a$ be a $d$(sublinear) $p$-form i.e., there exists a $(p-1)$-form $\b$ such that
$$|\a(x)|\leq c,\ |\b(x)|\leq c(\rho(x,x_{0})+1).$$
Then for each $L^{2}$-cohomology class $[\eta]\in H^{q}_{(2)}(M)$,
$$[\a\wedge\eta]=0\in H_{(2)}^{p+q}(M).$$	
\end{theorem}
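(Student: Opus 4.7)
The plan is to show directly that $\a\wedge\eta$ lies in the closure of $d\mathcal{D}^{p+q-1}(d)$ for a convenient representative $\eta$; this will give $[\a\wedge\eta]=0$ in the reduced $L^{2}$-cohomology $H^{p+q}_{(2)}(M)$. By the identification $H^{q}_{(2)}(M)\simeq\mathcal{H}^{q}_{(2)}(M)$ recalled in Section~2, I would take $\eta$ to be the smooth $L^{2}$-harmonic representative of $[\eta]$, so that $d\eta=0$; the bound $|\a|_{g}\leq c$ then guarantees that $\a\wedge\eta\in\Om^{p+q}_{(2)}(M)$ is $d$-closed (here using also $d\a=d^{2}\b=0$).

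The naive candidate for a primitive is $\b\wedge\eta$, since $d(\b\wedge\eta)=\a\wedge\eta$ formally (using $d\eta=0$); the trouble is that $\b$ has (at most) linear growth in $\rho(\cdot,x_{0})$, so $\b\wedge\eta$ need not lie in $L^{2}$. I would remedy this by the standard cutoff trick. On the complete manifold $M$ choose Lipschitz cutoffs $f_{i}\colon M\to[0,1]$ with $f_{i}\equiv1$ on $B(x_{0},i)$, $\textrm{supp}\,f_{i}\subset B(x_{0},2i)$, and $|df_{i}|_{g}\leq C/i$ for a universal $C$ (obtained by mollifying a piecewise linear function of $\rho(\cdot,x_{0})$, using $|\na\rho|\leq 1$ a.e.). The forms $\gamma_{i}:=f_{i}\,\b\wedge\eta$ are compactly supported and therefore lie in $\mathcal{D}^{p+q-1}(d)$, with
\begin{equation*}
d\gamma_{i}=df_{i}\wedge\b\wedge\eta+f_{i}\,\a\wedge\eta.
\end{equation*}

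Next I would check that $d\gamma_{i}\to\a\wedge\eta$ in $L^{2}$ as $i\to\infty$. For the bulk term, $|(1-f_{i})\a\wedge\eta|_{g}\leq c|\eta|_{g}$ is dominated by an $L^{2}$ function and vanishes on $B(x_{0},i)$, so dominated convergence yields $f_{i}\,\a\wedge\eta\to\a\wedge\eta$ in $L^{2}$. The error term is the delicate one: on the annulus $A_{i}=B(x_{0},2i)\setminus B(x_{0},i)$ the bounds $|df_{i}|\leq C/i$ and $|\b|\leq c(1+2i)\leq 3ci$ multiply to give $|df_{i}\wedge\b|_{g}\leq 3cC$, which is merely bounded and does not shrink pointwise. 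Nevertheless,
\begin{equation*}
\|df_{i}\wedge\b\wedge\eta\|_{L^{2}}^{2}\leq (3cC)^{2}\int_{A_{i}}|\eta|_{g}^{2}\,d\textrm{Vol}_{g}\longrightarrow 0,
\end{equation*}
because $\eta\in L^{2}(M)$ and the annuli $A_{i}$ escape to infinity. Combining the two limits yields $\a\wedge\eta\in\overline{d\mathcal{D}^{p+q-1}(d)}$, i.e.\ $[\a\wedge\eta]=0$ in $H^{p+q}_{(2)}(M)$.

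The only delicate point is the interplay between the linear growth of $\b$ and the $1/i$ decay of $|df_{i}|$: the product is bounded but does not tend to zero, and one has to trade this against the $L^{2}$-integrability of $\eta$ on the annulus $A_{i}$. This is exactly where the ``$d$(sublinear)'' hypothesis is used; had $\b$ been $d$(bounded) the argument would be immediate, because $\b\wedge\eta$ would already be $L^{2}$ and no cutoffs would be required. No special structure of $M$ beyond Riemannian completeness (to obtain the cutoffs) and the $L^{2}$ bound on $\eta$ enters the proof.
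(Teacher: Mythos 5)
Your proof is correct and follows essentially the same route as the paper: both take a closed representative of the class (you use the harmonic one, the paper any closed $L^{2}$ form), cut off $\b\wedge\eta$ by functions supported in $B_{2r}$ with gradient of order $1/r$, and trade the linear growth of $\b$ against that decay so the error term is controlled by $\int_{B_{2r}\setminus B_{r}}|\eta|^{2}\to0$, giving $\a\wedge\eta\in\overline{d\mathcal{D}^{p+q-1}(d)}$. No substantive difference; your remarks on where $d$(sublinearity) enters match the paper's estimates exactly.
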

\begin{proof}
Let $B_{r}$ be the ball in $M$ with centre $x_{0}$ and radius $r$. Take a smooth function $\chi_{r}:M\rightarrow\mathbb{R}^{+}$ with $0\leq\chi_{r}(x)\leq1$,
\begin{equation*}
\chi_{r}(x)=\left\{
\begin{aligned}
1, &  & x\in B_{r} \\
0,  &  & x\in M\backslash B_{2r}
\end{aligned}
\right.
\end{equation*}
and $|d\chi_{r}(x)|\leq K/\rho$ for $x\in B_{2r}\backslash B_{r}$. Such a function may be obtained by smoothing the function $f(\rho(x,x_{0}))$  where $f(\rho)=1$ for $\rho\leq r$, $f(\rho)=2-\rho/r$ for $r\leq\rho\leq 2r$ and $f(\rho)=0$ for $\rho\geq2r$.

The form $d(\chi_{r}\b\wedge\eta)$ has compact support, so $d(\chi_{r}\b\wedge\eta)\in\Om^{p+q}_{(2)}$. We want to show that as $r\rightarrow\infty$ these forms converge in $L^{2}$ to $\a\wedge\eta$. Consider
\begin{equation}\label{E1}
d(\chi_{r}\b\wedge\eta)=d\chi_{r}\wedge\b\wedge\eta+\chi_{r}\a\wedge\eta.
\end{equation}
As $\chi_{r}=1$ on $B_{r}$, $\chi_{r}\a\wedge\eta$ converges pointwise to $\a\wedge\eta$. Moreover, $|\a(x)|\leq c$ and $\eta\in L^{2}$, so $\a\wedge\eta\in L^{2}$ and hence, since $\chi_{r}$ is bounded, $\chi_{r}\a\wedge\eta\rightarrow\a\wedge\eta$ in $L^{2}$.

Now $d\chi_{r}$ vanishes on $B_{r}$ and outside $B_{2r}$ , and on the annulus in between we have the estimates $|d\chi_{r}(x)|\leq K/\rho(x,x_{0})$ and $|\b(x)|\leq c(\rho(x,x_{0})+1)$. Then 
$$\int_{M}|d\chi_{r}\wedge\b\wedge\eta|\leq const.\int_{B_{2r}\backslash B_{r}}|\eta|^{2}\leq const.\int_{M\backslash B_{r}}|\eta|^{2}.$$
This converges to zero as $r\rightarrow\infty$ since $\eta\in L^{2}$. We thus have converges of both terms on the right-hand side of (\ref{E1}) and consequently $d(\chi_{r}\b\wedge\eta)$ converges in $L^{2}$ to $\a\wedge\eta$. Hence $\a\wedge\eta$ lies in the closure of $d\Om^{p-1}_{(2)}\cap\Om^{p}_{(2)}$ and its $L^{2}$-cohomology class vanishes.
\end{proof}

\begin{theorem}\label{T1}
Let $M^{2n}$ be a closed symplectic parabolic manifold satisfies the hard Lefschetz property. We denote by the universal covering $\pi:(\tilde{M},\tilde{g},\tilde{J},\tilde{\w})\rightarrow (M, g, J,\w)$. Then all $L^{2}$ harmonic $p$-forms on $(\tilde{M},\tilde{g},\tilde{J},\tilde{\w})$ for $p\neq n$ vanish.
\end{theorem}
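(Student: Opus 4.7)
The plan is to combine three ingredients already established in the paper: the $L^{2}$-hard Lefschetz property on $\tilde M$ given by Theorem~\ref{T6}, Poincar\'e duality (which, as noted in the remark after Theorem~\ref{T6}, upgrades the Lefschetz surjections to isomorphisms on the complete non-compact cover), and the cup-product vanishing of Theorem~\ref{T8} for $d$(sublinear) forms. First I would reduce to the range $p<n$ using the Hodge star isomorphism $\ast:\mathcal{H}^{p}_{(2)}(\tilde M)\to\mathcal{H}^{2n-p}_{(2)}(\tilde M)$, so that the two-sided statement follows from one side.

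Next I would upgrade the $d$(sublinear) property of $\tilde\w$ to each of its powers. Writing $\tilde\w=d\tilde\theta$ with $|\tilde\theta(x)|\leq c(1+\rho_{\tilde g}(x,x_{0}))$, and using that $\tilde\w=\pi^{\ast}\w$ is uniformly bounded (it is the lift of a form on the compact base) together with $d\tilde\w=0$, I obtain for each $j\geq 1$
$$\tilde\w^{j}=d\bigl(\tilde\theta\wedge\tilde\w^{j-1}\bigr),\qquad \bigl|\tilde\theta\wedge\tilde\w^{j-1}\bigr|(x)\leq C\bigl(1+\rho_{\tilde g}(x,x_{0})\bigr).$$
Thus $\tilde\w^{n-p}$ is a bounded, closed, $d$(sublinear) form in exactly the sense required by Theorem~\ref{T8}.

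Now let $\eta$ be any $L^{2}$-harmonic $p$-form on $\tilde M$ with $p<n$, viewed as a class $[\eta]\in H^{p}_{(2)}(\tilde M)$. Applying Theorem~\ref{T8} to $\a=\tilde\w^{n-p}$ and $\eta$ yields
$$L^{n-p}[\eta]=[\tilde\w^{n-p}\wedge\eta]=0\ \text{in}\ H^{2n-p}_{(2)}(\tilde M).$$
By Theorem~\ref{T6} and the remark following it, $L^{n-p}:H^{p}_{(2)}(\tilde M)\to H^{2n-p}_{(2)}(\tilde M)$ is an isomorphism, so $[\eta]=0$ in $H^{p}_{(2)}(\tilde M)\simeq\mathcal{H}^{p}_{(2)}(\tilde M)$; since $\eta$ is $L^{2}$-harmonic this forces $\eta=0$.

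The main obstacle I expect is conceptual rather than computational: I need to secure that on the reduced $L^{2}$-cohomology of the universal cover $L^{n-p}$ is truly an \emph{isomorphism}, so that $L^{n-p}[\eta]=0$ forces $[\eta]=0$. Surjectivity comes from Theorem~\ref{T6}, and injectivity follows because both source and target have the same von Neumann $\Ga$-dimension (by Hodge $\ast$-duality), so a $\Ga$-equivariant surjection between finite-dimensional Hilbert $\Ga$-modules of equal dimension must be an isomorphism. The sublinear estimate on $\tilde\theta\wedge\tilde\w^{j-1}$ is then straightforward given boundedness of $\tilde\w$.
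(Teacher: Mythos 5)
Your proposal is correct and follows essentially the same route as the paper: write $\tilde\w^{n-p}=d(\tilde\theta\wedge\tilde\w^{n-p-1})$ with a sublinear primitive, apply Theorem~\ref{T8} to kill $L^{n-p}$ on reduced $L^{2}$-cohomology, and play this off against Theorem~\ref{T6}. The only cosmetic difference is how the contradiction is closed --- the paper uses surjectivity of the (zero) map $L^{n-p}$ to conclude $H^{2n-p}_{(2)}(\tilde M)=0$ and then dualizes, while you use injectivity via the von Neumann $\Ga$-dimension count; both are immediate from the remark after Theorem~\ref{T6}, and your $\Ga$-dimension argument for injectivity is a valid (indeed more detailed) justification of that remark.
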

\begin{proof}
Since $M$ is compact, $\w$ is bounded and $\pi$ is a local isometry, $\tilde{\w}:=\pi^{\ast}(\w)=d\eta$ is a $d$(sublinear) form on $\tilde{M}$. For any $p<n$, $\tilde{\w}^{n-p}=d(\eta\wedge\tilde{\w}^{n-p-1})$ is also a $d$(sublinear) form on $\tilde{M}$, since  $$|\tilde{\w}^{n-p}|\leq|\tilde{\w}|^{n-p}\leq c$$ and $$|(\eta\wedge\tilde{\w}^{n-p-1})(x)|\leq |\eta(x)|\cdot|\tilde{\w}|^{n-p-1}\leq c(\rho(x,x_{0})+1).$$ 
From Theorem \ref{T8}, the linear growth of $\eta\wedge\tilde{\w}^{n-p-1}$ implies that the map $L^{n-p}: H_{(2)}^{p}(M)\rightarrow H^{2n-p}_{(2)}(M)$ defined by $L([\eta])=[\w^{n-p}\wedge\eta]$ is zero.  By Theorem \ref{T6}, it means that  $H_{(2)}^{p}(M)=\{0\}$ for all $p\neq n$. Hence the only non-zero harmonic forms occur when $p=n$.
\end{proof}
\begin{remark}
By using methods of contact geometry, starting with a contact manifold $(M,\a)$ having an exact symplectic filling (see \cite{HT} Definition 3.1), the authors construct a $d$(bounded) complete almost K\"{a}hler manifold $Y$ satisfying $H^{1}_{(2)}(Y)\neq {0}$. Therefore, the condition of hard Lefshetz property could not remove under the method in the vanishing theorem \ref{T1}.
\end{remark}
\begin{corollary}
Let $M^{2n}$ be a closed symplectic manifold satisfies the hard Lefschetz property. If the sectional curvature of $M^{2n}$ is non-positive, then the Euler characteristic of $M$ satisfies the inequality $(-1)^{n}\chi(M^{2n})\geq0$.		
\end{corollary}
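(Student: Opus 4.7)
The plan is to reduce the corollary directly to Theorem \ref{T1} combined with the Atiyah $L^{2}$-index theorem recalled in Section 4.1. The only hypothesis of Theorem \ref{T1} that is not already in our hands is that $M^{2n}$ be symplectic parabolic, i.e.\ that the pullback $\tilde{\w} = \pi^{\ast}\w$ to the universal cover $\pi : \tilde{M}\to M$ be $d$(sublinear).

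First I would verify exactly this. Because $M$ is closed with non-positive sectional curvature, its universal cover $(\tilde{M},\tilde{g})$ is complete, simply connected, and non-positively curved, so Theorem \ref{T3} (or equivalently Proposition \ref{P1} applied to the closed $2$-form $\w$) yields a primitive $\eta$ on $\tilde{M}$ with $d\eta = \tilde{\w}$ and $|\eta(x)|_{\tilde{g}} \leq c(1+\rho_{\tilde{g}}(x,x_{0}))$. Thus $(M,g,J,\w)$ is symplectic parabolic in the sense of the definition in the introduction, and since by assumption it also satisfies the hard Lefschetz property, Theorem \ref{T1} applies: every $L^{2}$ harmonic $p$-form on $\tilde{M}$ vanishes for $p\neq n$. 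In terms of the $L^{2}$-Hodge numbers of Section 4.1 this says $h^{p}_{(2)}(M) = \dim_{\Gamma}\mathcal{H}^{p}_{(2)}(\tilde{M}) = 0$ for every $p\neq n$.

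Now I invoke the Atiyah $L^{2}$-index theorem identity
\begin{equation*}
\chi(M^{2n}) \;=\; \sum_{k=0}^{2n}(-1)^{k}\,h^{k}_{(2)}(M) \;=\; (-1)^{n}\,h^{n}_{(2)}(M),
\end{equation*}
recalled in Section 4.1. Since $h^{n}_{(2)}(M)\geq 0$ by definition of the Von Neumann dimension, this immediately gives $(-1)^{n}\chi(M^{2n}) = h^{n}_{(2)}(M) \geq 0$, which is the desired inequality.

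I do not expect any serious obstacle here: all the analytic work (the $d$(sublinear) property of lifts of closed forms on non-positively curved compact manifolds, the vanishing of $L^{2}$ harmonic forms off the middle degree under symplectic parabolicity plus hard Lefschetz, and the $L^{2}$-index formula for $\chi$) has already been established in the paper. The only point worth being careful about is checking that the hypotheses of Theorem \ref{T1} are genuinely satisfied, i.e.\ that non-positive sectional curvature of the compact base really does force $\tilde{\w}$ to be $d$(sublinear); this is the role of Proposition \ref{P1}/Theorem \ref{T3} in the argument and is the only substantive step.
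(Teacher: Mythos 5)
Your proposal is correct and follows exactly the paper's own route: non-positive curvature plus Proposition \ref{P1} (via Theorem \ref{T3}) gives that $\pi^{\ast}\w$ is $d$(sublinear), so Theorem \ref{T1} concentrates the $L^{2}$ cohomology of the universal cover in degree $n$, and the Atiyah $L^{2}$-index theorem yields $(-1)^{n}\chi(M^{2n})=h^{n}_{(2)}(M)\geq 0$. You even spell out the final index-theorem computation slightly more explicitly than the paper does; no gaps.
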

\begin{proof}
Let $\pi:M\rightarrow\tilde{M}$ be the universal covering map and $\w$ the symplectic form on $M$. Since $\pi^{\ast}\w$ is a bounded closed form on $M$, it follows from Proposition \ref{P1} that $\pi^{\ast}\w$ is $d$(sublinear). It follows from Theorem \ref{T1} that the $L^{2}$ cohomology of $M$ is concentrated in the middle dimension. The Atiyah index theorem for covers \cite{Atiyah} then gives $(-1)^{n}\chi(M^{2n})\geq0$.
\end{proof}
In \cite{CX}, the authors shown that the property of $d$(sublinearity) has homotopy invariance.
\begin{lemma}(\cite[Lemma 3]{CX})\label{L1}
Let $F:M_{1}\rightarrow M_{2}$ be a smooth homotopy equivalence between two compact Riemannian manifolds, $\pi:\tilde{M}_{i}\rightarrow M_{i}$ the universal covering maps for $i=1,2$. Then, for any closed differential form $\a$ on $M_{2}$, $\pi^{\ast}(\a)$ is $d$(sublinear) on $\tilde{M}_{2}$ if the form $(F\circ\pi)^{\ast}(\a)$ is $d$(sublinear)  on $\tilde{M}_{1}$.
\end{lemma}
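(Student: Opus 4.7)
\textbf{Proof plan for Lemma \ref{L1}.}
My plan is to transport the sublinear primitive $\tilde{\beta}$ on $\tilde{M}_{1}$ (given by the hypothesis) over to $\tilde{M}_{2}$ by lifting a homotopy inverse of $F$ and correcting the error with the Cartan homotopy operator. Since $F$ is a homotopy equivalence, it induces an isomorphism $F_{\ast}:\pi_{1}(M_{1})\to\pi_{1}(M_{2})$, so, after choosing compatible base points $\tilde{x}_{1}\in\tilde{M}_{1}$ and $\tilde{x}_{2}\in\tilde{M}_{2}$ with $\pi_{2}\circ\tilde{F}=F\circ\pi_{1}$, $F$ lifts to $\tilde{F}:\tilde{M}_{1}\to\tilde{M}_{2}$. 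Let $G:M_{2}\to M_{1}$ be a homotopy inverse, and lift it to $\tilde{G}:\tilde{M}_{2}\to\tilde{M}_{1}$. After possibly precomposing with a deck transformation of $\tilde{M}_{2}$, we may arrange that $\tilde{F}\circ\tilde{G}$ is lifted-homotopic to $\mathrm{id}_{\tilde{M}_{2}}$ via a lift $\tilde{H}:\tilde{M}_{2}\times[0,1]\to\tilde{M}_{2}$ of a chosen homotopy $H:M_{2}\times[0,1]\to M_{2}$ from $F\circ G$ to $\mathrm{id}_{M_{2}}$.

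Set $\tilde{\alpha}:=\pi_{2}^{\ast}\alpha$, a closed form on $\tilde{M}_{2}$, and observe $(F\circ\pi_{1})^{\ast}\alpha=\tilde{F}^{\ast}\tilde{\alpha}$. By hypothesis this equals $d\tilde{\beta}$ with $|\tilde{\beta}(y)|\leq c(1+\rho_{\tilde{g}_{1}}(y,\tilde{x}_{1}))$. The Cartan homotopy formula applied to the closed form $\tilde{\alpha}$ along $\tilde{H}$ yields
\[
(\tilde{F}\circ\tilde{G})^{\ast}\tilde{\alpha}-\tilde{\alpha}=d(K\tilde{\alpha}),\qquad K\tilde{\alpha}:=\int_{0}^{1}\tilde{H}_{t}^{\ast}(\iota_{X_{t}}\tilde{\alpha})\,dt,
\]
where $X_{t}=\partial_{t}\tilde{H}_{t}$. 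Combining this with $\tilde{F}^{\ast}\tilde{\alpha}=d\tilde{\beta}$ and pulling back by $\tilde{G}$ gives
\[
\tilde{\alpha}=d\bigl(\tilde{G}^{\ast}\tilde{\beta}-K\tilde{\alpha}\bigr)=:d\tilde{\eta},
\]
so it only remains to verify that $\tilde{\eta}$ grows sublinearly on $\tilde{M}_{2}$.

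For the main term $\tilde{G}^{\ast}\tilde{\beta}$: because $G$ is smooth between compact Riemannian manifolds, $|dG|$ is uniformly bounded on $M_{2}$, and since both $\pi_{1},\pi_{2}$ are local isometries, $|d\tilde{G}|$ is bounded on $\tilde{M}_{2}$ by the same constant. Using completeness of $\tilde{M}_{1}$ and minimizing geodesics in $\tilde{M}_{2}$, this implies $\tilde{G}$ is globally Lipschitz, so $\rho_{\tilde{g}_{1}}(\tilde{G}(x),\tilde{x}_{1})\leq L\,\rho_{\tilde{g}_{2}}(x,\tilde{x}_{2})+L'$, and therefore
\[
|\tilde{G}^{\ast}\tilde{\beta}(x)|\leq C\,|\tilde{\beta}(\tilde{G}(x))|\leq C'\bigl(1+\rho_{\tilde{g}_{2}}(x,\tilde{x}_{2})\bigr).
\]
For $K\tilde{\alpha}$: $\alpha$ is smooth on the compact $M_{2}$ so $\tilde{\alpha}$ is uniformly bounded on $\tilde{M}_{2}$; the homotopy $H$ lives on the compact $M_{2}\times[0,1]$, so $|d\tilde{H}|$ and $|X_{t}|$ are uniformly bounded, and hence $|K\tilde{\alpha}|$ is uniformly bounded. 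Adding the two estimates gives $|\tilde{\eta}(x)|\leq C(1+\rho_{\tilde{g}_{2}}(x,\tilde{x}_{2}))$, proving that $\pi_{2}^{\ast}\alpha=d\tilde{\eta}$ is $d$(sublinear).

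The step I expect to be the main obstacle is the equivariant bookkeeping at the lifting stage, namely arranging that the specific lifts $\tilde{F}$ and $\tilde{G}$ chosen can actually be connected to $\mathrm{id}_{\tilde{M}_{2}}$ by a lift of the compact homotopy $H$, rather than only up to a nontrivial deck transformation. Once the correct base-point/deck choices are made so that $\tilde{H}$ exists with $\tilde{H}_{0}=\mathrm{id}$ and $\tilde{H}_{1}=\tilde{F}\circ\tilde{G}$, the two metric estimates above are routine consequences of the compactness of $M_{1},M_{2}$ and $M_{2}\times[0,1]$ together with the fact that covering maps are local isometries.
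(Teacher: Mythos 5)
Your proof is correct. For comparison: the paper does not actually prove Lemma \ref{L1} at all --- it is imported wholesale with the citation \cite[Lemma 3]{CX} --- so the only in-paper relative is the proof of Theorem \ref{T3}, which uses the same Cartan homotopy-operator device along the geodesic contraction; your write-up essentially reconstructs the argument of the cited source. All three of your ingredients check out: (i) the deck-transformation bookkeeping you flag as the main obstacle is genuinely needed but harmless, since any two lifts of $F\circ G$ differ by postcomposition with a deck transformation $\gamma$, and $\tilde{\alpha}=\pi_{2}^{\ast}\alpha$ is invariant under the deck group, so replacing $\tilde{F}$ by $\gamma\circ\tilde{F}$ leaves the identity $\tilde{F}^{\ast}\tilde{\alpha}=(F\circ\pi_{1})^{\ast}\alpha=d\tilde{\beta}$ intact; (ii) the Lipschitz estimate for $\tilde{G}$ is valid because $\tilde{M}_{2}$, carrying the pulled-back metric of a compact manifold, is complete, so minimizing geodesics from the base point exist and transport the bound $\|d\tilde{G}\|\leq L$ into the distance estimate; (iii) the uniform bound on $K\tilde{\alpha}$ follows as you say, since $|X_{t}|$, $\|d\tilde{H}_{t}\|$ and $|\tilde{\alpha}|$ are all controlled by their counterparts on the compact $M_{2}\times[0,1]$ through the local isometry $\pi_{2}$. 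The one step you should state explicitly rather than leave implicit is that the homotopy $H$ from $F\circ G$ to $\mathrm{id}_{M_{2}}$ is a priori only continuous, so a Whitney-type smoothing is needed before Cartan's formula can be applied fiberwise (the paper makes the analogous smoothing remark about $F$ in the proof of Theorem \ref{T7}); with that sentence added, your argument is complete.
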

\begin{theorem}\label{T7}
Let $M^{}$ be a compact Riemannnian manifold of  the non-positive sectional curvature. If $M^{2n}$ is homeomorphic to a symplectic manifold  satisfies the hard Lefschetz property, then the Euler characteristic of $M$ satisfies the inequality $(-1)^{n}\chi(M^{2n})\geq0$.	
\end{theorem}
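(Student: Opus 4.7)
The plan is to reduce to the symplectic parabolic case already handled: namely to upgrade the hypothesis that $M$ is homeomorphic to a hard-Lefschetz symplectic manifold $N$ into the conclusion that the symplectic form of $N$, lifted to its universal cover, is $d$(sublinear). Once this is achieved, Theorem \ref{T1} gives the vanishing of $L^{2}$-harmonic $p$-forms on $\tilde{N}$ for $p\neq n$, and a homotopy-invariance argument transfers the corresponding vanishing to $\tilde{M}$, after which the Atiyah $L^{2}$ index theorem produces the desired inequality.

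Concretely, let $N^{2n}$ be a closed symplectic manifold with the hard Lefschetz property that is homeomorphic to $M$. I would first approximate the homeomorphism by a smooth homotopy equivalence $F:M\to N$ between compact smooth manifolds, and denote by $\w$ the symplectic form on $N$ and by $\pi_{M},\pi_{N}$ the two universal covering maps. Since $M$ is compact, $F^{\ast}\w$ is a bounded closed $2$-form on $M$; since $M$ has non-positive sectional curvature, Proposition \ref{P1} applies and shows that $(F\circ\pi_{M})^{\ast}\w=\pi_{M}^{\ast}(F^{\ast}\w)$ is $d$(sublinear) on $\tilde{M}$. The key step, Lemma \ref{L1}, then transports this property along the homotopy equivalence: $\tilde{\w}=\pi_{N}^{\ast}\w$ is $d$(sublinear) on $\tilde{N}$, so $N$ itself is a closed symplectic parabolic manifold satisfying the hard Lefschetz property.

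With Theorem \ref{T1} in hand, we deduce $\mathcal{H}_{(2)}^{p}(\tilde{N})=\{0\}$ for all $p\neq n$, equivalently $h_{(2)}^{p}(N)=0$ for $p\neq n$. By Dodziuk's homotopy invariance of $L^{2}$-Betti numbers recalled in the remark following the definition of $h_{(2)}^{k}$, the vanishing transfers to $M$: $h_{(2)}^{p}(M)=h_{(2)}^{p}(N)=0$ for $p\neq n$. The Atiyah $L^{2}$ index theorem then yields
$$\chi(M^{2n})=\sum_{k=0}^{2n}(-1)^{k}h_{(2)}^{k}(M)=(-1)^{n}h_{(2)}^{n}(M),$$
so that $(-1)^{n}\chi(M^{2n})=h_{(2)}^{n}(M)\geq 0$. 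The main obstacle I anticipate is this last homotopy-invariance step: the two universal covers $\tilde{M}$ and $\tilde{N}$ carry genuinely different Riemannian metrics and the $L^{2}$-harmonic theories on them are a priori distinct, so the argument relies crucially on the fact that the $\Gamma$-dimension of $\mathcal{H}_{(2)}^{k}(\tilde{M})$ depends only on the homotopy type of the closed manifold $M$, not on the particular invariant metric used to compute it.
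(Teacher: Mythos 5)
Your proposal is correct and follows essentially the same route as the paper: smooth the homeomorphism to a homotopy equivalence $F$, apply Proposition \ref{P1} to get $d$(sublinearity) of $(F\circ\pi)^{\ast}\w$ on the non-positively curved universal cover of $M$, transfer it to $\tilde{N}$ via Lemma \ref{L1}, and conclude with Theorem \ref{T1} and Atiyah's $L^{2}$ index theorem. The only (immaterial) difference is the final bookkeeping: you pull the vanishing of the $L^{2}$-Betti numbers back to $M$ via Dodziuk's homotopy invariance and apply the index theorem on $M$, whereas the paper applies the index theorem on the symplectic manifold directly and finishes with the ordinary homotopy invariance $\chi(M)=\chi(N)$ of the Euler characteristic, which avoids invoking Dodziuk's theorem.
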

\begin{proof}
Suppose that $F: M^{2n}_{1}\rightarrow M^{2n}_{2}$ is a homotopy equivalence from a compact Riemannian manifold $M^{2n}_{1}$ of non-positive sectional curvature to a closed symplectic manifold $M^{2n}_{2}$ which satisfies  hard Lefschetz property. By an approximation argument if necessary, one may assume that $F$ is a smooth map.

Let $\pi:(\tilde{M}_{i},\tilde{g}_{i})\rightarrow(M_{i},g_{i})$ be the universal covering map, $\w$ the symplectic form on $M_{2}$. Since $(F\circ\pi)^{\ast}\w$ is a bounded closed form on $\tilde{M}^{2n}_{1}$, it follows from Proposition \ref{P1} that $(F\circ\pi)^{\ast}\w$ is $d$(sublinear). By Lemma \ref{L1}, the lifted symplectic form $\tilde{\w}$ on $\tilde{M}^{2n}_{2}$ is $d$(sublinear) as well. It follows from Theorem \ref{T1} that the $L^{2}$ cohomology of $\tilde{M}^{2n}_{2}$ is concentrated in the middle dimension. The Atiyah index theorem for covers \cite{Atiyah} then gives $(-1)^{n}\chi(M^{2n}_{2})\geq0$. Since $\chi(M^{2n}_{1})=\chi(M^{2n}_{2})$, the conclusion follows.
\end{proof}
\section*{Acknowledgements}
We would like to thank Professor H.Y. Wang for drawing our attention to the symplectic parabolic manifold and generously helpful suggestions about these.  We would also like to thank the anonymous referee for  careful reading of my manuscript and helpful comments. This work is supported by Natural Science Foundation of China No. 11801539 (Huang), No. 11701226 (Tan) and Natural Science Foundation of Jiangsu Province BK20170519 (Tan).

\bigskip
\footnotesize

\end{document}